\newcommand{\sideremark}[1]{\ifvmode\leavevmode\fi\vadjust{\vbox to0pt{\vss 
      \hbox to 0pt{\hskip\hsize\hskip1em           
 \vbox{\hsize2cm\tiny\raggedright\pretolerance10000
 \noindent #1\hfill}\hss}\vbox to8pt{\vfil}\vss}}}%
\newcommand{\xyL}[1]{%
\xydef@\xymatrixrowsep@{#1}
} 
\newcommand{\xyC}[1]{%
\xydef@\xymatrixcolsep@{#1}
} 
\newcommand\thmsname{Theorem}
 \newcommand\nm@thmtype{thm}
 \theoremstyle{plain}
 \newenvironment{namedthm}[1]{
   \renewcommand\thmsname{#1}\renewcommand\nm@thmtype{namedtheorem}
   \begin{\nm@thmtype}
}
   {\end{\nm@thmtype}
}
\theoremstyle{plain}
\newtheorem{thm}{\protect\theoremname}[section]
\theoremstyle{definition}
\newtheorem{defn}[thm]{\protect\definitionname}
\theoremstyle{remark}
\newtheorem{rem}[thm]{\protect\remarkname}
\theoremstyle{definition}
\newtheorem*{example*}{\protect\examplename}
\theoremstyle{remark}
\newtheorem{notation}[thm]{\protect\notationname}
\theoremstyle{plain}
\newtheorem{prop}[thm]{\protect\propositionname}
\theoremstyle{plain}
\newtheorem{lem}[thm]{\protect\lemmaname}
\numberwithin{equation}{section}
\providecommand{\definitionname}{Definition}
\providecommand{\remarkname}{Remark}
\providecommand{\examplename}{Example}
\providecommand{\lemmaname}{Lemma}
\providecommand{\notationname}{Notation}
\providecommand{\propositionname}{Proposition}
\providecommand{\theoremname}{Theorem}
\begin{document}
\global\long\def\acc#1#2{\accentset#1#2}%

\title{LINEARIZATION OF COMPLEX HYPERBOLIC DULAC GERMS}
\author{D. PERAN$^1$, M. RESMAN$^2$, J.P. ROLIN$^3$, T. SERVI$^4$}
\thanks{This research of D. Peran and M. Resman is partially supported by the Croatian Science Foundation (HRZZ) grant UIP-2017-05-1020. The research of M. Resman is also partially supported
	by the Croatian Science Foundation (HRZZ) grant PZS-2019-02-3055 from Research Cooperability funded by the European Social Fund. The research of all four authors is partially supported
	by the Hubert-Curien `Cogito’ grant 2021/2022 \emph{Fractal and transserial approach to differential
	equations}.}
\subjclass[2010]{34C20, 37C25, 39B12, 47H10, 12J15}
\keywords{Dulac germs and series, hyperbolic fixed point, linearization, Koenigs' sequence} 

\begin{abstract}
We prove that a hyperbolic Dulac germ with complex coefficients in its expansion is linearizable on a standard
quadratic domain and that the linearizing coordinate is again a complex Dulac
germ. The proof uses results about normal forms of hyperbolic transseries
in \cite{prrs:normal_forms_hyperbolic_logarithmic_transseries}.
\end{abstract}

\maketitle
\tableofcontents{}

\section{Introduction}

Given the germ $f$ of a real or complex function in one variable
at the fixed point $0$, the \emph{linearization} of $f$ consists
in finding a number $\lambda$ and a change of coordinates $\varphi$
which solves \emph{Schr{\"o}der's equation $\varphi\left(f\left(z\right)\right)=\lambda\varphi\left(z\right)$}
\cite{schroder:ueber_iterirte_functionen}. In the differentiable
case, $\lambda$ is the \emph{multiplier} $f'\left(0\right)$.\emph{
}A goal of the linearization is to embed $f$ in a flow, which allows
to define the so-called ``fractional iterates'' of $f$ by the formula
$f^{\left[t\right]}\left(z\right):=\varphi^{-1}\left(\lambda^{t}\varphi\left(z\right)\right)$
for $t\in\mathbb{R}$ (or $t\in\mathbb{C}$). Schr{\"o}der's equation has been solved by
Koenigs in the case of a holomorphic germ $f$ at $0\in\mathbb{C}$
with a \emph{hyperbolic attractive} fixed point (that is, when $0<\left|\lambda\right|<1$)
\cite{koenigs:recherches_integrales_equations_fonctionnelles,carleson-gamelin:complex-dynamics,milnor:dynamics_one_comple_variable}.
Koenigs' method consists in proving that the so-called \emph{Koenigs
sequence} $\left(\varphi_{n}\right)$, defined by $\varphi_{n}=\dfrac{1}{\lambda^{n}}f^{\circ n}$,
converges uniformly to a holomorphic solution $\varphi$ called the
\emph{Koenigs linearizing coordinate}. Moreover, $\varphi$ is a biholomorphism
which is tangent to the identity, that is $\varphi\left(z\right)=z+o(z)$.\emph{
}\\

The problem of the convergence of Koenigs' sequence for more general
germs has been tackled by various authors. Kneser proved the convergence
of the Koenigs sequence for a hyperbolic attracting real germ $f$ such
that $f\left(x\right)=\lambda x+O\left(\left|x\right|^{1+\delta}\right)$ for $x\to0$,
for some $\delta>0$ \cite{kneser:reelle_analytische_losungen}. However,
this result does not help much for the purposes of iteration since,
under Kneser's hypotheses, the limit $\varphi$ of Koenigs' sequence
may not admit a compositional inverse (so that $\varphi$ is not,
strictly speaking, a change of coordinates). To overcome this problem,
Szekeres imposed stronger conditions on $f$ \cite{szekeres:regular_iteration_real_complex}.
He proved that if $f$ is continuous and has a differentiable representative
on an interval $\left(0,d\right)$ which is strictly increasing, with
$0<f\left(x\right)<x$ on $\left(0,d\right)$, and if $f'\left(x\right)=\lambda+O\left(x^{\delta}\right)$ for $x\to0$,
for some $0<\lambda<1$ and $\delta>0$, then its Koenigs sequence
converges on $\left(0,d\right)$ to a differentiable and strictly
increasing solution. A simpler proof for real germs of class $\mathcal{C}^{r}$,
$r\ge2$, has been provided by Sternberg \cite{sternberg:local_cn_transformations_real_line,navas:groups_circle_diffeomorphisms}.\\

More recently, the convergence of Koenigs' sequences for maps admitting
an asymptotic behavior in the scale of iterated logarithms has been
considered \cite{dewsnap-fischer:convergence-koenigs-sequences}.
More precisely, if $f$ is an interval map of class $\mathcal{C}^{1}$
defined in a neighborhood of the fixed point $0$ with $f'\left(0\right)=\lambda$,
$0<\lambda<1$, such that
\begin{equation}\label{eq:type}
f\left(x\right)=\lambda x+O\left(\frac{x}{y\log\left(y\right)\cdots\log^{\circ (p-1)}\left(y\right)\left(\log^{\circ p}\left(y\right)\right)^{1+\varepsilon}}\right)
\end{equation}
for $x\to0$, for some $\varepsilon>0$ and a nonnegative integer $p$, where $y:=-\log\left(\left|x\right|\right)$,
then the Koenigs sequence of $f$ converges uniformly on a neighborhood
of $0$ to a limit $\varphi$ such that $\varphi\left(0\right)=0$
and $\varphi'\left(0\right)=1$. The exponent $\varepsilon>0$ plays
an important role here, as shown by the following example from \cite{sternberg:local_cn_transformations_real_line}.
If $f$ is defined by $f\left(x\right)=x\left(\lambda-\dfrac{1}{\log\left(x\right)}\right)$
for $x\in(0,d]$, $d>0$, and $f\left(0\right):=0$, then its Koenigs
sequence diverges on $(0,d]$ (see also \cite{dewsnap-fischer:convergence-koenigs-sequences,navas:groups_circle_diffeomorphisms}).\\

The previous result on functions with a logarithmic asymptotic behavior
leads us naturally to a class of maps studied increasingly in the
last few years, namely, maps which admit a \emph{transserial asymptotic
expansion} at $0$. In a word, a \emph{transseries} is a generalized
series whose monomials involve the exponential and logarithm functions.\emph{
}Maps (or germs of maps) which admit at their fixed point a transseries
as their asymptotic expansion are studied by physicists nowadays (see
for example \cite{aniceto_ince_bacsar_schiappa:primer_resurgent_transseries_asymptotics}).
They also appear in dynamical systems and differential equations,
e.g. as first return maps of polycycles of polynomial vector fields,
studied in detail by {\'E}calle \cite{ecalle:dulac} and Il'yashenko \cite{ilyashenko:dulac}
in their respective proofs of Dulac's Conjecture. In this case, while
being generated by a continuous dynamical system (a polynomial vector
field), these maps are viewed as discrete dynamical systems on the
real line. It is hence relevant to study them from the point of view
of iteration theory. Their properties reveal the features of the generating system. In particular, the multiplicity of such a map in a parametric family is linked to the cyclicity of the polycycle \cite{roussarie:book}. In this spirit, the orbits of some of them have
been recently analyzed from the point of view of ``fractal analysis'' to read the cyclicity in bifurcations or formal normal forms \cite{zu-zu:poincare_map_in_fractal_analysis,mardesic_resman-zupanovic:multiplicity_fixed_points,resman-formal-parabolic}.\\

Among these first return maps, some are of particular importance.
In this paper, we consider \emph{Dulac maps}, which are called \emph{almost regular germs} by Il'yashenko \cite{ilyashenko:nondegenerate_translated}. The first return maps of nondegenerate polycycles of saddle type belong to the class of Dulac germs.
They are analytic on an open interval $\left(0,d\right)$, and their
(trans)asymptotic expansion at $0$ is a (possibly) infinite sum of
powers of the variable multiplied by real polynomials in the logarithm
of the variable (what Il'yashenko calls in \cite{ilyashenko:nondegenerate_translated} a \emph{Dulac series}). Moreover, a
remarkable achievement of Il'yashenko is the following \emph{quasianalyticity}
result: a Dulac map is equal to the identity if and only if its asymptotic
expansion is equal to the identity. In order to prove this, he showed (considering complexifications in $\mathbb C^2$ of planar saddles)
that Dulac maps can be analytically extended to sufficiently big complex domains
(called \emph{standard quadratic domains}) of the Riemann surface
of the logarithm. Then, due to the fact that these extensions decay
faster than exponentially at infinity on these domains, the conclusion
follows by a version of the maximum modulus principle, the Phragmen-Lindel{\"o}f
theorem \cite{ilyashenko:nondegenerate_translated}.

The Dulac maps play a crucial role in Il'yashenko's study of Dulac's Conjecture about non-accumulation of limit cycles on elementary polycycles,
and have been extensively studied since. In particular, we are interested
here in their normal forms and their embeddings in flows of vector
fields, that is, in their study from the point of view of iteration
theory.\\

This approach has been initiated in \cite{mrrz:fatou} and continued
in \cite{mardesic_resman:moduli_parabolic}. There, the authors consider
Dulac maps \emph{tangent to the identity}, that is, with asymptotic
expansions of the form $z+o\left(z\right)$, when $z\rightarrow0$.
It was proved that such a Dulac map can be conjugated on attracting
and repelling sectors for its local dynamics, via a (sectorially)
analytic change of coordinate called a \emph{Fatou coordinate}, to
the translation $t\mapsto t+1$. This sectorial Fatou coordinate admits
a transserial asymptotic expansion, which is, however, more complicated
than a Dulac series.\\

Our main result here is the linearization of \emph{complex hyperbolic Dulac
germs} (Theorem B). These are Dulac maps defined on standard quadratic domains whose
asymptotic Dulac series, with possibly complex coefficients, are of the form $\lambda z+o\left(z\right)$, for $\lambda\in\mathbb C$, 
$0<|\lambda|<1$, uniformly in the domain as $|z|\to0$. We prove that a complex Dulac map
admits on a standard quadratic domain a linearizing Koenigs coordinate,
which is itself a complex Dulac map tangent to the identity (Theorem B in Section \ref{sec:notation main result}). Although the Dulac germs appearing as first return maps around saddle type polycycles of planar vector fields have only real coefficients in the expansion, the interest of complex Dulac germs lies in the fact that they appear as corner maps of hyperbolic complex saddles in $\mathbb C^2$, see e.g. Section 7 in \cite{loray:pseudo_groupe_singularite}.\\

Two main tools are used in the proof. First, we prove a general result,
which is an extension of Koenigs' and Dewsnap-Fisher's results: if
$f(z)=\lambda z+o(z)$ when $|z|\to0$, and $0<|\lambda|<1$, is a hyperbolic analytic map on a convenient invariant subdomain of the Riemann surface of the logarithm, and if it
has on this domain an asymptotic behavior similar to the one considered
in \cite{dewsnap-fischer:convergence-koenigs-sequences}, then the
Koenigs sequence of $f$ converges on the same subdomain to an analytic
linearizing map $\varphi$ tangent to the identity (Theorem A in Section
\ref{sec:analytic linearization dewsnap}). Note that we do not request
in Theorem A any particular asymptotic behavior after the first term.
Secondly, using the results of \cite{prrs:normal_forms_hyperbolic_logarithmic_transseries}
on the linearization of hyperbolic logarithmic transseries, we prove
that, if $f$ is a hyperbolic Dulac map on a quadratic domain, then
its Koenigs coordinate is also a Dulac map (see Section \ref{sec:linearization Dulac maps}).
This last fact marks a difference with the parabolic case studied
in \cite{mrrz:fatou}, where it was proven that the normalizing coordinate
of a parabolic Dulac map is in general not a Dulac map, but belongs
to some bigger class of germs with logarithmic transserial asymptotic
expansions.\\

Our theorems are thus a generalization of the standard Koenigs linearization
result for hyperbolic analytic germs from $\mathrm{Diff}(\mathbb{C},0)$
to hyperbolic germs which are not necessarily analytic at a fixed
point, on their invariant subdomains of the Riemann surface of the logarithm.

\section{\label{sec:notation main result}Notation and main results}

\subsection{\label{subsec:definitions and notation}Definitions and Notation}

In what follows, $z$ denotes a complex infinitesimal variable.\\

We first give the definition of a \emph{complex Dulac series} and of a \emph{complex Dulac germ}. We are motivated by the definition of \emph{Dulac series} and of \emph{almost regular germs} given in \cite{ilyashenko:dulac}. However, while the definitions there were adapted to the Dulac problem for real vector fields, we work here with series with complex coefficients.

\medskip

In order to define an almost regular germ, we recall a few classical
definitions. We denote by $\widetilde{\mathbb{C}}:=\left\{ \left(r,\theta\right):r\in\mathbb{R}_{>0},\theta\in\mathbb{R}\right\} $
the \emph{Riemann surface of the logarithm}. By a classical abuse
of notation, we write its elements $\left(r,\theta\right)$ as $z=r\mathrm{e}^{\mathrm{i}\theta}$,
where $r=\left|z\right|>0$ and $\theta=\arg(z)\in\mathbb{R}$.
A \emph{(spiraling) neighborhood of the origin} in $\widetilde{\mathbb{C}}$
is a set of the form $\mathcal{V}=\left\{ r\mathrm{e}^{\mathrm{i}\theta}:0<r<h\left(\theta\right)\right\} $,
where $h:\mathbb{R}\rightarrow\left(0,+\infty\right)$ is a continuous
function. Two functions define the same \emph{germ
at the origin of} $\widetilde{\mathbb{C}}$ if they coincide on some
spiraling neighborhood of the origin in $\widetilde{\mathbb{C}}$.

We endow $\widetilde{\mathbb{C}}$ with a structure of one-dimensional analytic Riemann
manifold whose atlas consists of a single chart, called the \emph{logarithmic
chart}, 
\[
-\log\!:\widetilde{\mathbb{C}}\rightarrow\mathbb{C},\thinspace z=r\mathrm{e}^{\mathrm{i}\theta}\mapsto\zeta=-\log z=-\log r-\mathrm{i}\theta.
\]
Hence $z=\exp\left(-\zeta\right)=\mathrm{e}^{-\zeta}$. Notice that
$z\rightarrow0$ (in the sense that $\left|z\right|\rightarrow0$)
when $\zeta\rightarrow\infty$ on $\mathbb{C}$ (in the sense that
$\Re\left(\zeta\right)\rightarrow+\infty$). 

A function $\widetilde{f}\colon \widetilde{A}\subseteq\widetilde{\mathbb{C}}\rightarrow\widetilde{\mathbb{C}}$
is called \emph{analytic} if its representation $f:\zeta\mapsto-\log\left(\widetilde{f}\left(\mathrm{e}^{-\zeta}\right)\right)$
on the domain $A=-\log\left(\widetilde{A}\right)$ of the logarithmic chart $\mathbb{C}$
is analytic. In what follows, we often switch for convenience between
the presentation of a domain or a function in the $z$-chart and its presentation in the logarithmic chart $\zeta=-\log z$ (also called the $\zeta$-chart).

Finally, let $\mathbb{C}^{+}=\{\zeta\in\mathbb{C}:\ \Re(\zeta)>0\}$.\\

Following \cite[Section 24]{ilyashenko-yakovenko:lectures_analytic_differential_equations}, given $C>0,$ the \emph{standard quadratic domain} $\mathcal{R}_{C}\subset\mathbb{C}$
is the set defined in the logarithmic
chart $\mathbb{C}$ as
\begin{equation}
\kappa\big(\mathbb{C}^{+}\big),\text{ where }\kappa(\zeta)=\zeta+C(\zeta+1)^{\frac{1}{2}},\label{eq:sqd}
\end{equation}
(see Figure~\ref{fig:prva}). The set $\widetilde{\mathcal{R}}_{C}\subset\widetilde{\mathbb{C}}$
represented by $\mathcal{R}_{C}$ is a spiraling
neighborhood of the origin in $\widetilde{\mathbb{C}}$. 
\begin{defn}[The \emph{germ} of a standard quadratic domain $\mathcal R_C$]\label{def:gsqd}
For $C>0$, let $\mathcal R_C\subset\mathbb{C}$ be the standard quadratic domain given by \eqref{eq:sqd} in the $\zeta$-chart. We call the elements of the collection of sets 
$$\{(\mathcal{R}_{C})_{R}=\mathcal{R}_C\cap ([R,+\infty) \times \mathbb{R}) : R>0\},$$
the \emph{germs} of the standard quadratic domain $\mathcal R_C$. 
\end{defn}
\begin{rem}\label{rem:gdomain}
	Let $C>0$. For each germ $(\mathcal R_C)_R$ of the standard quadratic domain $\mathcal R_C$, there exists sufficiently big $C_0>R$ such that for every $C'>C_0$, standard quadratic domain $\mathcal R_{C'}$ is entirely contained in $(\mathcal R_C)_R$.  Indeed, for $C'>C_0$, where $C_0$ is sufficiently big, it can be seen that $\mathcal R_{C'}\subset \mathcal R_C$. Also, for every $\zeta\in\mathcal R_{C'}$, it holds that $\Re(\zeta)>C'>C_0>R$.
\end{rem}
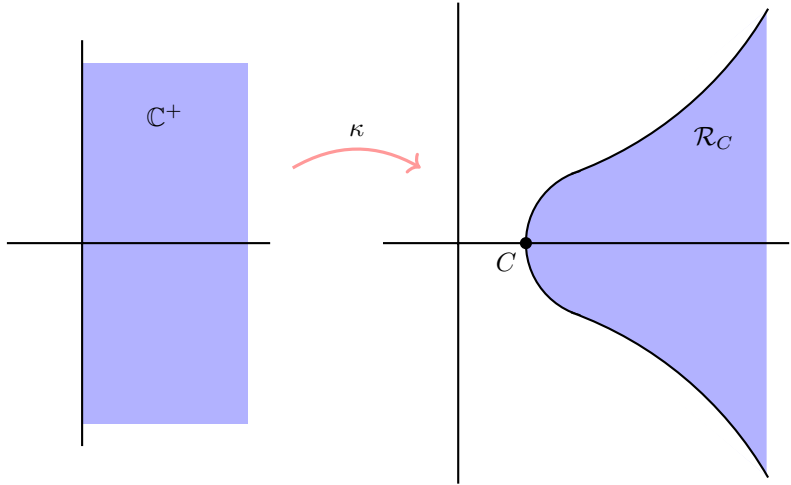
\begin{figure}[h!]
\centering
	\begin{tikzpicture}
		\filldraw[draw=none,fill=blue!30!white] (0,-2.4) rectangle (2.2,2.4);
		\draw (3.65,1.5) node{$\kappa $};
		\filldraw[draw=none,fill=blue!30!white] (9.1,-3.1) -- (6.6,-0.96) -- (6.6,0.96) -- (9.1,3.1) -- cycle;
		\filldraw[draw=black,thick,fill=blue!30!white] (6.9,0) circle (1cm);
		\filldraw[draw=none,thick,fill=blue!30!white,rounded corners] (6.62,-1.05) rectangle (8.5,1.05);
		\filldraw[draw=black,thick,fill=white] (6.5,0.92) arc (290:330:5cm);
		\filldraw[draw=black,thick,fill=white] (6.5,-0.92) arc (70:30:5cm);
		\draw[red!40!white,very thick,->] (2.8,1) to [in=150,out=30] (4.5,1);
		\draw (5.9,0) node[draw=black,thick,fill=black,circle,scale=0.4]{};
		\draw (5.9,0) node[below left]{$C$} (1.1,1.7) node{$\mathbb C^+$} (8.4,1.4) node{$\mathcal{R}_{C}$};
		\draw[black,thick] (-1,0) -- (2.5,0) (0,-2.7) -- (0,2.7) (4,0) -- (9.4,0) (5,-3.2) -- (5,3.2);
	\end{tikzpicture}
\caption{The image of standard quadratic domain $\mathcal{R}_{C}$, for some
$C>0$, in the $\zeta $-chart.}
\label{fig:prva}
\end{figure}

In analogy with \cite[Section 24]{ilyashenko-yakovenko:lectures_analytic_differential_equations}, a \emph{complex
Dulac series} is a transseries of the form
\begin{equation}
\widehat{f}=\alpha\zeta+\beta+\sum_{i=1}^{\infty}P_{i}(\zeta)\exp(-\alpha_i\zeta),
\quad\alpha\in\mathbb{R}_{>0},\thinspace\beta\in\mathbb{C},\thinspace P_i\in\mathbb{C}[\zeta],\label{eq:Dulac}
\end{equation}
where $(\alpha_{i})_{i\ge1}$ is a strictly
increasing sequence of positive real numbers belonging to a finitely generated
sub-semigroup of $\left(\mathbb{R}_{>0},+\right)$, such that $\left(\alpha_{i}\right)_{i}\to+\infty$.

The series $\widehat{f}$ is called \emph{parabolic} if $\alpha=1$ and $\beta=0$, and is called \emph{hyperbolic} if $\alpha=1$ and 
$\Re(\beta)\ne0$.\\

A \emph{complex Dulac germ} is
a holomorphic germ $f$ on a standard quadratic domain $\mathcal{R}_{C}$, 
which admits on $\mathcal{R}_{C}$ an asymptotic expansion 
given by a \emph{complex Dulac series} \eqref{eq:Dulac}, uniformly on $\mathcal{R}_{C}$
in the following sense: for every $\nu>0$, there exists $N_{\nu}\in\mathbb{N}$,
such that 
\begin{equation}
\big|f(\zeta)-\alpha\zeta-\beta-\sum_{i=1}^{N_{\nu}}P_{i}(\zeta)\exp(-\alpha_i\zeta)\big|=o(\exp(-\nu\zeta))),\label{eq:ekspa}
\end{equation}
as $\Re(\zeta)\rightarrow+\infty$ in $\mathcal{R}_{C}$.

\begin{rem}
A complex Dulac germ $f$ is represented in the $z$-chart by the germ $\widetilde{f}$ which admits as $z\rightarrow0$ an asymptotic expansion which is a \emph{logarithmic complex Dulac series}, that is a transseries 
\begin{equation}
\lambda z^\alpha+\sum_{i=1}^{+\infty}z^{\beta_i}Q_i(-\log z),\quad \alpha>0,\thinspace\lambda\in\widetilde{\mathbb{C}},\thinspace Q_i\in\mathbb{C}[X],
\label{eq:log-dulac-series}
\end{equation} 
where $(\beta_i)_{i\ge1}$ is a strictly increasing sequence of real numbers strictly bigger than $\alpha$, belonging to a finitely generated sub-semigroup of $(\mathbb{R}_{>0},+)$, and which tends to $+\infty$.
\end{rem}

If a complex Dulac series $\widehat f$ from \eqref{eq:Dulac} satisfies additionally $\beta\in\mathbb{R}_{\ge0}$ and $P_i\in\mathbb{R}[\zeta]$ for all $i\ge1$, then we call $\widehat f$ a \emph{real Dulac series}. If a  complex Dulac germ $f$ additionally satisfies that the image of $\{\zeta\in\mathcal R_C:\Im(\zeta)=0\}$ is again a subset of $\{\zeta\in\mathcal R_C:\ \Im(\zeta)=0\}$, then $f$ admits a real Dulac asymptotic expansion \eqref{eq:Dulac}, and we call $f$ a \emph{real Dulac germ} or an \emph{almost regular germ} \cite{ilyashenko:dulac}. 
\medskip

The \emph{complex} Dulac germs appear naturally as corner maps of hyperbolic complex saddles in $\mathbb C^2$. For example, consider the corner map of a complex saddle 
\begin{align}\label{eq:saddle}
\begin{cases}
z'&=z+O(2),\\
w'&=-\alpha_0 w+O(2),
\end{cases}
\end{align}
(where $O(2)$ are complex polynomials in the variables $z$ and $w$ of order at least $2$, and $\alpha_0\in\mathbb R_{>0}$),
realized between a pair of transversals $\{w=1\}\simeq \widetilde{\mathbb C}$ (horizontal) and $\{z=1\}\simeq {\widetilde{ \mathbb C}}$ (vertical) with canonical parametrizations as unit disks, or between any analytic reparametrizations of these transversals. By  Section 7 in \cite{loray:pseudo_groupe_singularite} this is a complex Dulac map. Note that the domain of definition of such corner maps is also a standard quadratic domain $\mathcal R_C$. Indeed, any complex saddle vector field \eqref{eq:saddle} is orbitally analytically equivalent to a normal form (see Section 22C in \cite{ilyashenko-yakovenko:lectures_analytic_differential_equations}):
\begin{align}\label{eq:normal-form-vect-field}
\begin{cases}
z'&=z,\\
w'&=w\left(-\alpha_0 +h(z,w)\right),
\end{cases}
\end{align}
where $h$ is a \emph{complex} analytic germ in two variables at $(0,0)$ and $h(z,w)=O(zw)$, as $z,\,w\to 0$. Computing a Dulac corner map of a complex saddle \eqref{eq:normal-form-vect-field}, exactly in the same way as it was done by Ilyashenko for complexified real saddles (see e.g. Section 3, Proof of Theorem 7.7 in \cite{roussarie:book}), we obtain that the domain of definition of the analytic corner map in the logarithmic chart has \emph{exponential growth}:
$$
\mathcal E_{C,M}:=\left\{\zeta\in\mathbb C:\,|\Im(\zeta)|\leq C \mathrm{e}^{M\Re(\zeta)}\right\},\,C,\,M>0,
$$
and it contains standard quadratic domains. 

Note that the saddle corner maps defined by foliation \eqref{eq:normal-form-vect-field} are not univalued, and that the procedure described in the proof of Theorem 7.7 in \cite{roussarie:book} provides just one possible determination. Indeed, the time being now a complex variable, one moves along the foliation described by \eqref{eq:normal-form-vect-field} from the point $(z,1)$ on the horizontal transversal to the point $(1,D(z))$ on the vertical transversal in complex time $-\log z=-\log|z|-\mathrm{i}\theta+2k\pi\mathrm{i},\ k\in\mathbb Z$. Depending on which determination of the logarithm one chooses (adding $2k\pi\mathrm{i},\ k\in\mathbb Z$, to the complex time), one obtains a different determination of the Dulac map $D(z)$. This corresponds to moving from the point $z$ along different paths of the complex foliation, that, when projected to the horizontal or the vertical separatrix, may include circling around the axes or not (resulting in composing $D$ with holonomies of the vertical and horizontal axes, which are non-ramified germs). For more details, see \cite{loray:pseudo_groupe_singularite}. However, no matter which determination we choose, $D(z)$ is a complex Dulac germ defined on a standard quadratic domain.
\smallskip

In \cite{ilyashenko:nondegenerate_translated} Il'yashenko proves
the following \emph{quasianalyticity} property of Dulac germs: if the Dulac
expansion of a real Dulac germ on a standard quadratic domain is just the identity,
then the germ itself is equal to the identity. The same property, based on Phragmen-Lindel\" of's maximum principle, can be proven similarly for complex Dulac germs, independently of the property of invariance of $\mathbb R_{\ge0}$ which is not necessarily satisfied for complex Dulac germs.


\subsection{The main result: Theorem B}

Consider an analytic germ at $+\infty$ in the $\zeta$-chart $\mathbb{C}$. 
We say that $f$ is: 

\begin{enumerate}
\item \emph{parabolic} if $f(\zeta)=\zeta+o(1)$ as $|\zeta|\to+\infty$, and if $f^{\circ q}\neq\mathrm{id}$
for all $q\in\mathbb{N}_{\geq 1}$, 
\item \emph{hyperbolic} if $f(\zeta)=\zeta+\beta+o(1)$, as $|\zeta|\to+\infty$, for some $\beta\in\mathbb{C}^+$ (we can always suppose that this is the case up to replacing $f$ by $f^{-1}$).
\end{enumerate}

For all our results, every statement on germs of maps means, as usual,
the similar statement for some representative of these germs. Our
version of Koenigs' linearization in the setting of hyperbolic complex Dulac
germs is the following:
\begin{namedthm}
{Theorem B}[Linearization of hyperbolic complex Dulac germs] Let $f(\zeta)=\zeta+\beta+o(1)$, $\beta\in\mathbb{C}^{+}$, be a hyperbolic
complex Dulac germ on a standard quadratic domain $\mathcal R_C$. Then there exists a unique
parabolic germ $\varphi$  
satisfying
\begin{equation}
\varphi\circ f=\varphi+\beta,\label{eq:linera}
\end{equation}
on $f$-invariant germs of $\mathcal R_C$.
Moreover, $\varphi$ is a complex parabolic Dulac germ $($possibly on a smaller standard quadratic subdomain $\mathcal R_{C'}\subset\mathcal R_C)$. Furthermore, if $f$ is a real Dulac germ, then $\varphi$ is also a real Dulac germ.
\end{namedthm}
Notice that the linearization equation \eqref{eq:linera}, written in the $\zeta$-chart, is an \emph{Abel-type equation} which says that $\varphi$ conjugates $f$ to the
translation by $\beta$. Written in the $z$-chart, it would become a \emph{Schr\"{o}der-type equation}
\begin{equation}
\widetilde{\varphi}\circ\widetilde{f}=\lambda\widetilde{\varphi},\label{eq:lineralog}
\end{equation}
where $\widetilde{\varphi}(z):=\exp(-\varphi(-\log{z}))$ and $\lambda=\exp(-\beta)\in\widetilde{\mathbb{C}}$.
\smallskip

The proof of Theorem~ B is in Section~\ref{sec:linearization Dulac maps}. In the statement of Theorem B, the linearization of a Dulac germ is done on an invariant germ of a \emph{standard quadratic} domain. The proof would also go through on every other invariant domain (e.g. constructed from various admissible domains from Subsection~\ref{subsec:admi}). Quadratic domains are chosen only for convenience, since they are the standard domains for Dulac germs, introduced in \cite{ilyashenko:nondegenerate_translated}. In the logarithmic chart, such domains are biholomorphic to $\mathbb C^+$, and hence the quasianalyticity property holds \cite{roussarie:book, ilyashenko:nondegenerate_translated}.  
\smallskip

In Section \ref{sec:analytic linearization dewsnap}, we prove a version (Theorem A in Subsection~\ref{subsec:thmA})
of this linearization result for less restrictive analytic maps defined
on invariant spiraling neighborhoods of the origin of $\widetilde{\mathbb{C}}$ (constructed from their various \emph{admissible} domains),
whose asymptotic behavior is bounded by a particular logarithmic term.
This statement, proven via the study of the convergence of the Koenigs
sequences, is a generalization to the complex setting of the original Koenigs result and of
the theorem of Dewsnap and Fisher in \cite{dewsnap-fischer:convergence-koenigs-sequences}.

The refinement about the Dulac nature of the linearizing coordinate
in the case of (complex) Dulac maps is proven in Section \ref{sec:linearization Dulac maps}.
It is based on our results on linearization of (formal) hyperbolic
transseries in \cite{prrs:normal_forms_hyperbolic_logarithmic_transseries}
and on the resolution of a particular homological equation.

\begin{rem} Note that the Dulac nature of the linearization $\varphi$ in Theorem~B is important: the quasianalyticity result of Ilyashenko \cite{ilyashenko:nondegenerate_translated} for real Dulac germs on standard quadratic domains (that is easily adaptable to complex Dulac germs) implies that the formal linearization $\widehat\varphi$ of the real Dulac expansion $\widehat f$ of a hyperbolic real Dulac germ $f$ obtained in \cite{prrs:normal_forms_hyperbolic_logarithmic_transseries} $($and repeated here in Lemma~\ref{lem:formal dulac linearization}$)$ \emph{uniquely determines} the $($unique$)$ analytic linearization $\varphi$ of $f$ on a standard quadratic domain. Therefore it is sufficient to work only with formal series.
\end{rem}

\section{\label{sec:analytic linearization dewsnap}Analytic linearization}

The purpose of this section is to show a general complex extension
of the results of \cite{dewsnap-fischer:convergence-koenigs-sequences}:
if a holomorphic map, defined on an appropriate invariant domain of the Riemann
surface of the logarithm, has an asymptotic behavior similar to the
one described in \cite{dewsnap-fischer:convergence-koenigs-sequences},
then it can be linearized by a parabolic change of coordinates obtained as the
uniform limit of its Koenigs sequence in the logarithmic chart. In particular,
we prove in Example $(3)$ below that the standard quadratic domains defined in Section \ref{sec:notation main result}
(on which the Dulac maps are usually considered) are particular types of such admissible (invariant) domains for the complex germs mentioned above. 
The general linearization result of this section will be used in the
proof of our main result (Theorem B, Section \ref{sec:linearization Dulac maps}),
which asserts the linearizability of hyperbolic Dulac maps by parabolic
Dulac changes of coordinates on standard quadratic domains. \\

In Subsection~\ref{subsec:admi}, we give the definition of an \emph{admissible domain}. These provide the main ingredient for producing invariant domains for hyperbolic complex holomorphic germs on $\widetilde{\mathbb C}$ with asymptotic behavior of type \eqref{eq:asy}, as stated in Proposition~\ref{prop:invariance of D}. 

In Subsection~\ref{subsec:ex}, we give several examples of such
domains. We show that standard quadratic domains and similar types of domains (bounded by curves of any growth $x^r,\ r>0,$ at infinity in the logarithmic chart) are invariant for all hyperbolic germs of type \eqref{eq:asy}, in particular, for complex hyperbolic Dulac germs. Finally, in Subsection~\ref{subsec:thmA} we state and prove our linearization
result on invariant domains for hyperbolic maps with complex multipliers
and an asymptotic behavior as in \cite{dewsnap-fischer:convergence-koenigs-sequences}
(Theorem A). The computations of this subsection are to some extent motivated by a normalization method for parabolic analytic germs in $(\mathbb C,0)$ described in \cite{Loray_series_divergentes}, and by Koenigs' linearization theorem for hyperbolic analytic germs in $(\mathbb C,0)$ described for example in \cite{carleson-gamelin:complex-dynamics}.\\

In this section, all the subdomains of the Riemann surface of the logarithm and all the maps defined on these domains are described in the logarithmic chart $\zeta=-\log z$.

\subsection{Admissible domains}\label{subsec:admi}

Recall that $\mathbb{C}^+:=\{\zeta\in\mathbb{C}:\Re(\zeta)>0\}$. Let $\beta \in \mathbb{C}^+$, $\varepsilon>0$ and $k\in\mathbb{N}$. Let 
\begin{align}\label{eq:rho}
M_{\varepsilon,k}(x) & :=\frac{1}{x\log x\cdots(\log^{\circ k} x)^{1+\varepsilon}},\\
\rho_{\beta , \varepsilon,k}^{\pm }(x) & :=\Re(\beta) \pm M_{\varepsilon,k}(x),\quad\text{for }x\in\left(\exp^{\circ k}\left(0\right),+\infty\right).\nonumber 
\end{align}
Note that $M_{\varepsilon,k}$ is a positive,
strictly decreasing map tending to $0$, as $x\to+\infty$. Therefore, $\rho_{\beta , \varepsilon,k}^{-}$ is a strictly increasing map and $\rho_{\beta , \varepsilon,k}^{+}$ is a strictly decreasing map, both tending to $\Re(\beta) $ at infinity.  Furthermore, it is known
that the series $\sum_{n\in\mathbb{N}}M_{\varepsilon,k}(x+ny)$ converges
for every $x,y>0$ (this last fact, which was used in \cite{dewsnap-fischer:convergence-koenigs-sequences},
will also be used in the proof of Theorem A). \\

In order to define admissible domains of type $(\beta , \varepsilon , k)$, we first define two functions $h_l$ and $h_u$, whose graphs bound the domain from ``below" and from ``above". We distinguish three cases: $\Im(\beta)>0$, $\Im(\beta) =0$ and $\Im(\beta)<0$:
\begin{enumerate}
\item[(i)] \textbf{Case} $\Im(\beta)>0$. Let $t>\exp^{\circ k}(0)$ such that $\rho_{\beta ,\varepsilon,k}^{-}(x)>0$ and $\Im(\beta)- M_{\varepsilon , k}(x)>0,\ x\in[t,+\infty)$. Let $h_{l},\ h_{u}:[t,+\infty)\to\mathbb R$ be any two functions satisfying: 
\begin{enumerate}[(1), font=\textup, nolistsep, leftmargin=0.6cm]
\item $h_{l}(x)<h_{u}(x)$, $x\in[t,+\infty)$;
\item $h_{l}$ is a decreasing map on $[t,+\infty)$, or $h_{l}$ is an increasing map with property:
\begin{align*}
h_{l}(x+\rho_{\beta ,\varepsilon,k}^{+}(x))-h_{l}(x) & \leq \Im(\beta) -M_{\varepsilon,k}(x),\ x\in[t,+\infty);
\end{align*}
\item $h_{u}$ is an increasing map with property:
\begin{align*}
h_{u}(x+\rho_{\beta ,\varepsilon,k}^{-}(x))-h_{u}(x) & \geq \Im(\beta) +M_{\varepsilon,k}(x),\ x\in [t,+\infty).
\end{align*}
\end{enumerate}

\item[(ii)] \textbf{Case} $\Im(\beta) =0$. Let $t>\exp^{\circ k}(0)$ such that $\rho_{\beta ,\varepsilon,k}^{-}(x)>0$, $x\in[t,+\infty)$. Let $h_{l},\,h_{u}:[t,+\infty)\to\mathbb R$ be any two functions satisfying:
\begin{enumerate}[(1), font=\textup, nolistsep, leftmargin=0.6cm]
	\item $h_{l}(x)<h_{u}(x)$, $x\in[t,+\infty)$;
	\item $h_{l}$ is a decreasing map with property:
	\begin{align*}
		h_{l}(x+\rho_{\beta ,\varepsilon,k}^{-}(x))-h_{l}(x) & \leq -M_{\varepsilon,k}(x),\ x\in[t,+\infty);
	\end{align*}
	\item $h_{u}$ is an increasing map with property:
	\begin{align*}
		h_{u}(x+\rho_{\beta ,\varepsilon,k}^{-}(x))-h_{u}(x) & \geq M_{\varepsilon,k}(x),\ x\in[t,+\infty).
	\end{align*}
\end{enumerate}
\item[(iii)] \textbf{Case} $\Im(\beta)<0$. Let $t>\exp^{\circ k}(0)$ such that $\rho_{\beta ,\varepsilon,k}^{-}(x)>0$ and $-\Im(\beta) - M_{\varepsilon , k}(x)>0,\ x\in[t,+\infty)$. Let $h_{l},\ h_{u}:[t,+\infty)\to\mathbb R$ be any two functions satisfying: 
\begin{enumerate}[(1), font=\textup, nolistsep, leftmargin=0.6cm]
\item $h_{l}(x)<h_{u}(x)$, $x\in[t,+\infty)$;
\item $h_{l}$ is a decreasing map on $[t,+\infty)$:
\begin{align*}
h_{l}(x+\rho_{\beta ,\varepsilon,k}^{-}(x))-h_{l}(x) & \leq \Im( \beta) -M_{\varepsilon,k}(x),\ x\in[t,+\infty);
\end{align*}
\item $h_{u}$ is an increasing map, or a decreasing map with property:
\begin{align*}
h_{u}(x+\rho_{\beta ,\varepsilon,k}^{+}(x))-h_{u}(x) & \geq \Im( \beta) +M_{\varepsilon,k}(x),\ x\in[t,+\infty).
\end{align*}
\end{enumerate}
\end{enumerate}
\smallskip

A map $h_{l}:[t,+\infty)\to\mathbb R$ with property $(2)$ is called a \emph{lower}
\emph{map of type} $(\beta ,\varepsilon,k)$. A map $h_{u}:[t,+\infty)\to\mathbb R$
with property $(3)$ is called an \emph{upper map of type $\left(\beta ,\varepsilon,k\right)$}.
A pair $(h_{l},h_{u})$ of maps $h_{l},\, h_{u}:[t,+\infty)\to\mathbb{R}$, satisfying conditions $(1)-(3)$ above,
is called a \emph{lower-upper pair of type} $(\beta ,\varepsilon,k)$. Notice that the opposite of an upper map of type $(\beta,\varepsilon,k)$ is a lower map of type $(\beta,\varepsilon,k)$.
\smallskip

Finally, let
\begin{equation}
D_{h_{l},h_{u}}:=\left\lbrace \zeta\in\mathbb{C}^{+}:\mathrm{Re}\,\zeta\geq t,\ h_{l}(\mathrm{Re}\,\zeta)<\mathrm{Im}\,\zeta<h_{u}(\mathrm{Re}\,\zeta)\right\rbrace .
\end{equation}

\begin{defn}[Admissible domain]
\label{def:admissible domain lambda real}Let $\beta \in \mathbb{C}^+$, $\varepsilon>0$ and $k\in\mathbb{N}$. A \emph{domain of type} $(\beta ,\varepsilon,k)$ (or \emph{$(\beta, \varepsilon ,k)$-domain}) is defined as a union of an arbitrary nonempty collection of subsets of the form $D_{h_{l},h_{u}}\subseteq\mathbb{C}$ defined above. Similarly, a subset $D\subseteq\mathbb{C}$ which contains a $(\beta, \varepsilon ,k)$-domain is called an \emph{admissible domain of type} $(\beta ,\varepsilon,k)$ (or \emph{$(\beta, \varepsilon ,k)$-admissible domain}).
\end{defn}

\begin{rem}\label{rem:union} It follows from Definition~\ref{def:admissible domain lambda real} that an arbitrary union of domains of type $(\beta,\varepsilon,k)$ is again a domain of type $(\beta,\varepsilon,k)$. 
\end{rem}

\subsection{Examples}\label{subsec:ex}\

In this subsection, we fix $\beta \in \mathbb{C}^+$, $\varepsilon>0$ and $k\in\mathbb{N}$. 
We give here several examples of upper (lower) maps of type $(\beta ,\varepsilon,k)$ and of $(\beta,\varepsilon,k)$-admissible domains.

In particular, the fact that a standard quadratic domain as in \eqref{eq:sqd} is $(\beta,\varepsilon,k)$-admissible is proven in 
Example $(3)$.
\smallskip

We first provide a general technical sufficient condition under
which a map $h$ is an upper or a lower map. This condition is then used
in the following Examples $(2)$ and $(4)$. 

\textbf{Upper map condition in case $\Im(\beta)\geq0$.} Let $t>\exp^{\circ k}(0)$ such that $\rho_{\beta,\varepsilon,k}^-(t)>0$. Note that $\rho_{\beta ,\varepsilon,k}^{-}\left(t\right)\leq\rho_{\beta ,\varepsilon,k}^{-}(x)$
, for $x\in[t,+\infty)$.
Let $h\in\mathcal{C}^{n}\left([t,+\infty)\right)$, for some $n\in\mathbb{N}_{\geq1}$, be an increasing map.
Suppose that there exists a positive number $0<\rho<\rho_{\beta,\varepsilon,k}^{-}\left(t\right)$
such that 
\begin{align}
\sum_{i=1}^{n}\frac{h^{(i)}(x)}{i!}\rho^{i} & \geq \Im(\beta) +M_{\varepsilon,k}(x),\text{ for all }x\geq t,\label{eq:Condition4}
\end{align}
and that $h^{(n)}:[t,+\infty)\to\mathbb{R}$ is increasing. Then $h$ is an upper map of type $(\beta ,\varepsilon,k)$.

This can be seen as follows. Since $h$ and $h^{(n)}$ are increasing, it follows from Taylor's theorem and then \eqref{eq:Condition4}
that 
\begin{align*}
h(x+\rho_{\beta ,\varepsilon,k}^{-}(x))-h(x)&\geq h(x+\rho)-h(x)\geq\sum_{i=1}^{n}\frac{h^{(i)}(x)}{i!}\rho^{i}\\
&\geq \Im(\beta) + M_{\varepsilon,k}(x),\ x\in[t,+\infty).
\end{align*}

\textbf{Upper map condition in case $\Im(\beta) < 0$.} Let $t>\exp^{\circ k}(0)$. Suppose that $h:[t,+\infty)\to\mathbb R$ is either an increasing map, or a decreasing map belonging to $\mathcal C^n([t,+\infty))$ for some $n\in\mathbb N_{\geq1}$, which satisfies, for some positive number $\rho>\rho_{\beta ,\varepsilon,k}^{+}(t)$:
\begin{align}
\sum_{i=1}^{n}\frac{h^{(i)}(x)}{i!}\rho^{i} & \geq \Im(\beta) +M_{\varepsilon,k}(x),\text{ for all }x\geq t,\label{eq:Condition9}
\end{align}
and that $h^{(n)}:[t,+\infty)\to\mathbb{R}$ is increasing. Then $h$ is an upper map of type $(\beta ,\varepsilon,k)$.

\textbf{Lower map condition in case} $\Im(\beta)>0$. Let $t>\exp^{\circ k}(0)$ such that $\Im( \beta) - M_{\varepsilon , k}(t)>0$. Then, $\Im(\beta) - M_{\varepsilon , k}(x)>0,\ x\in[t,+\infty)$. Similarly as above, if $h$ is either decreasing on $[t,+\infty)$, or increasing belonging to $\mathcal C^n([t,+\infty))$ for some $n\in\mathbb N_{\geq1}$, and satisfying the property 
\begin{align*}
\sum_{i=1}^{n}\frac{h^{(i)}(x)}{i!}\rho^{i} & \leq \Im(\beta) - M_{\varepsilon,k}(x),\text{ for all }x\geq t,
\end{align*}
for some $\rho>\rho_{\beta ,\varepsilon,k}^{+}(t)$, and with $h^{(n)}$ decreasing on $[t,+\infty)$,
it follows that $h$ is a lower map of type $(\beta ,\varepsilon,k)$. 

\textbf{Lower map condition in case} $\Im(\beta) \leq 0$. Let $t>\exp^{\circ k}(0)$ such that $\rho_{\beta,\varepsilon,k}^-(t)>0$. If $h$ is a decreasing map belonging to $\mathcal C^n([t,+\infty))$ for some $n\in\mathbb N_{\geq1}$, and satisfying the property 
\begin{align}\label{eq:cond5}
	\sum_{i=1}^{n}\frac{h^{(i)}(x)}{i!}\rho^{i} & \leq \Im(\beta)-M_{\varepsilon,k}(x),\ \text{ for all }x\geq t,
\end{align}
for some $0<\rho<\rho_{\beta ,\varepsilon,k}^{-}(t)$, and if $h^{(n)}$ is decreasing,
it follows that $h$ is a lower map of type $(\beta ,\varepsilon,k)$.

\begin{example*}[1] (\emph{Sufficient condition for upper/lower maps})

Here, $\Im(\beta)\geq 0$.	Let $t>\exp^{\circ k}(0)$ be such that $\rho_{\beta ,\varepsilon,k}^{-}(t)>0$. Let $h:[t,+\infty)\to\mathbb{R}$ be an increasing map of class
	$\mathcal{C}^{1}$, such that:

1.  $h':[t,+\infty)\to\mathbb{R}$ tends to $\lambda '> \frac{\Im(\beta) }{\Re(\beta) }, $ as $x\to + \infty $. Then, since $M_{\varepsilon,k}(x)\to 0$ and $\rho_{\beta,\varepsilon,k}^{-} \to \Re(\beta)$ as $x\to+\infty$, there exists $t'\geq t$ sufficiently large such that
	\begin{equation}\label{eq:hprime_frac}
	h'(x)\geq \frac{\Im(\beta) +M_{\varepsilon ,k}(t')}{\rho _{\beta , \varepsilon ,k}^{-}(t')} , 
	\end{equation}
	for every $x\geq t'$. 
	Since $M_{\varepsilon ,k}$ is decreasing and $\rho_{\beta,\varepsilon,k}^{-}$ is increasing, for every $v\in(0,1)$ and $x>t'$,
	$$h'(x+v\rho_{\beta,\varepsilon,k}^{-}(x))\cdot \rho_{\beta,\varepsilon,k}^{-}(x)\geq \Im(\beta) +M_{\varepsilon,k}(x).$$ 
	Hence, by the Mean Value Theorem, the restriction $h|_{\left[t',+\infty\right\rangle }$ is an upper map of type $(\beta , \varepsilon ,k)$.
	\smallskip
	
	2. $h':[t,+\infty) \to \mathbb{R}$ tends to $+\infty $, as $x\to +\infty $. Then we choose $t'\geq t$ sufficiently large such that  \eqref{eq:hprime_frac} 
	holds. The restriction $h|_{\left[t',+\infty\right\rangle }$
	is therefore an upper map of type $(\beta ,\varepsilon,k)$.
\smallskip 
	
In the case $\Im(\beta)<0$, any increasing $h$ on $[t,+\infty)$ is an upper map of type $(\beta,\varepsilon,k)$. 

\smallskip

Analogously, a similar sufficient condition can be deduced for lower maps of type $(\beta,\varepsilon,k)$.
\end{example*}

\begin{example*}[2] (\emph{Maps of type $h(x)\sim x^r,\ r>0$}) 

1. \emph{Case $r>1$.} Let $h:[t,+\infty)\to \mathbb{R}$, $t>0$, be an increasing map of class $\mathcal{C}^{1}$ such that\footnote {We write $f\sim g$, $x\to\infty$, if $\lim_{x\to\infty}\frac{f(x)}{g(x)}=1.$}  \begin{equation}
\label{eq:hii}h(x)\sim ax^r,\ h'(x)\sim ar x^{r-1},\ a>0,\ r>1, \ x\to+\infty.
\end{equation} 
By Example (1.2), there exists $t>0$ big enough such that $h$ is an upper map of type $(\beta , \varepsilon ,k)$. 

Let $h:[t,+\infty)\to \mathbb{R}$, $t>0$, be a decreasing $\mathcal{C}^{1}$ map , such that \begin{equation}
\label{eq:hiii}h(x)\sim-ax^r,\ h'(x)\sim -ar x^{r-1},\ a>0,\ r>1,\ x\to+\infty.
\end{equation}
By a similar argument, there exists $t>0$ big enough such that $h$ is a lower map of type $(\beta , \varepsilon ,k)$.

2. \emph{Case $r=1$.} Let $h:[t,+\infty)\to \mathbb{R}$, $t>0$, be an increasing $\mathcal{C}^{1}$ map such that 
$$h(x)\sim ax,\ h'(x)\sim a,\  a>0,\ x\to +\infty.$$ Then, by Example (1.1), if $\Im(\beta)<0$, or if $\Im(\beta)\geq 0$ and $\frac{\Im(\beta)}{\Re(\beta)}<a$, there exists $t>0$ big enough such that $h$ is an upper map of type $(\beta , \varepsilon ,k)$. Let $h:[t,+\infty)\to \mathbb{R}$, $t>0$, be a decreasing $\mathcal{C}^{1}$ map satisfying  $h(x)\sim -ax$, $h'(x)\sim -a$, $a>0$, $x\to +\infty$. Similarly, if $\Im(\beta) >0$, or if $\Im(\beta) \leq 0$ and $\frac{\Im(\beta) }{\Re(\beta)}>-a$, there exists big enough $t>0$ such that $h$ is a lower map of type $(\beta,\varepsilon,k)$.

3. \emph{Case $0<r<1$.} The sufficient condition of Example (1) is not satisfied for $0<r<1$. However, suppose that $h$ is increasing (\emph{resp.} decreasing) of class $\mathcal{C}^{2}$, satisfying \eqref{eq:hii} (\emph{resp.} \eqref{eq:hiii}) with $r<1$, with the additional property that $h''(x)\sim ar(r-1)x^{r-2}$ (\emph{resp.} $h''(x)\sim -ar(r-1)x^{r-2}$) and $h''$ is increasing (\emph{resp.} decreasing). It follows from conditions \eqref{eq:Condition4} and \eqref{eq:cond5}, in a similar way as in Example (4) below, that if $\Im(\beta)=0$, then $h$ is an upper (\emph{resp.} lower) map of type $(\beta,\varepsilon,k)$, $\varepsilon>0$, $k\in\mathbb N$. 
\end{example*}

\begin{example*}[3] (\emph{Standard quadratic domains})

For $C>0$, let $\mathcal{R}_{C}\subseteq\mathbb{C}$
	be the standard quadratic domain defined in \eqref{eq:sqd}. The upper
	half of the boundary of $\mathcal{R}_{C}$ is described by a smooth
	function which satisfies sufficient conditions of Example $(1.2)$ (and the lower half satisfies
	the symmetric statement), hence such a domain is admissible. 
	
	Indeed,
	a direct computation shows that the boundary 
	\[
	\partial\left(\mathcal{R}_{C}\cap\left\lbrace \zeta\in\mathbb{C}:\mathrm{Im}\,\zeta\geq0\right\rbrace \right)
	\]
	can be parameterized by:
\begin{align*}
	r\to x(r)+\mathrm{i}\cdot y(r)=\ C&\sqrt[4]{r^{2}+1}\cos\left(\frac{1}{2}\mathrm{arctg}\,r\right)+\\
	&+\mathrm{i}\cdot\left(r+C\sqrt[4]{r^{2}+1}\sin\left(\frac{1}{2}\mathrm{arctg}\,r\right)\right),\ r\in\left[0,+\infty\right).
	\end{align*}
Note that $y:[0,+\infty)\rightarrow\mathbb{R}$
	is strictly increasing. Let $t>0$ be such that $x\left(t\right)>\exp^{\circ k}\left(0\right)$
	and $x$ is strictly increasing on $[t,+\infty)$. Therefore, $h_{u}:=y\circ x^{-1}$
	is strictly increasing on $[x(t),+\infty)$. By direct computation,
	if can be shown that the derivative of $h_{u}$ on $[x(t),+\infty)$ tends to $+\infty $, as $x\to + \infty $. Therefore, by Example (1.2),  there exists $x'>0$ such that the restriction $h_{u}|_{\left[ x',+\infty \right\rangle }$ is an upper map
	of type $\left(\beta ,\varepsilon,k\right)$. A similar argument can be repeated to show that an appropriate restriction of the lower boundary of $\mathcal R_C$ is the graph of a lower map of type $\left(\beta ,\varepsilon,k\right)$.
\end{example*}

\begin{example*}[4] (\emph{Logarithmic upper/lower maps})

\emph{Logarithmic upper maps.} Let $\beta \in \mathbb{C}^{+}$ such that $\Im (\beta )=0$. Let $h:[t,+\infty)\to\mathbb{R}$, $h(x):=(\log x)^{\delta }$, $\delta \in \mathbb{R}_{>0}$, $t\in \mathbb{R}_{>1}$. Note that:
\begin{align*}
	h'(x) & = \frac{\delta \, (\log x)^{\delta }}{x\log x} , \\
	h''(x) & = \frac{\delta \, (\log x)^{\delta }}{x\log x} \cdot \Big( \frac{\delta -1}{x\log x} - \frac{1}{x} \Big) , \quad x\geq t.
\end{align*}
For every $0<\rho < \rho _{\beta ,\varepsilon ,k}^{-}$, there exists $t>\exp ^{\circ k}(0)$ big enough such that 
\begin{align*}
	& h'(x)\rho +\frac{1}{2}h''(x)\rho ^{2}=\frac{1}{x\log x} \cdot \rho \, \delta (\log x)^{\delta }\cdot \Big( 1+\frac{\rho \, (\delta -1)}{2x\log x} - \frac{\rho }{2x} \Big) \\
	& \geq \frac{1}{x\log x}\cdot\frac{1}{\log^{\circ 2} x\cdots(\log^{\circ k} x)^{1+\varepsilon}}=M_{\varepsilon,k}(x),
\end{align*}
for $x\geq t$. It can be proven that we can take $t>\exp ^{\circ k}(0)$ big enough such that $h'''(x)>0$, for each $x\geq t$. This implies that restriction of $h''$ on $\left[ t,+\infty \right) $ is an increasing map. Therefore, since $h$ is increasing, it follows from sufficient upper map condition \eqref{eq:Condition4} that there exists $t>\exp ^{\circ k}(0)$ such that the restriction $h|_{\left[ t,+\infty \right) }$ is an upper map of type $(\beta ,\varepsilon,k)$. \\

\emph{Logarithmic lower maps.} Let $\beta \in \mathbb{C}^{+}$ such that $\Im (\beta )=0$ and let $h$ be as defined above. It follows that $g:\left[ t,+\infty \right) \to \mathbb{R}$, defined by $g(x):=-h(x)$, $x\in \left[ t,+\infty \right) $, is a lower map of type $(\beta ,\varepsilon,k)$.
\end{example*}
\smallskip

\subsection{Linearization theorem for holomorphic maps on spiraling domains: Theorem A}\label{subsec:thmA}\

\begin{notation}
	For every 
set $D\subseteq\mathbb C$ and $R>0$, let
	$$D_{R}:=D\cap\left([R,+\infty)\times\mathbb{R}\right)\subseteq\mathbb C.$$ Note that, if $D\subseteq \mathbb C$ is a (admissible) domain of type $(\beta , \varepsilon ,k)$, then $D_{R}$ is a (admissible) domain of type $(\beta , \varepsilon ,k)$. 

	For every admissible domain $D$ of type $(\beta ,\varepsilon,k)$,
	denote by $\overline{D}$  its \emph{maximal} subdomain of type $(\beta,\varepsilon,k)$. That is, by Remark~\ref{rem:union}, $\overline{D}$ is defined as the union of all subdomains of type $(\beta,\varepsilon,k)$ of $D$. 
	
	Denote by $D^f$
	the maximal $f$-invariant subdomain of an admissible domain $D\subseteq\mathbb{C}$ (i.e. the union of all $f$-invariant subdomains of $D$). 
	
	Finally, for an admissible domain $D$, define $$D^f_{R}:=(D^f)_{R},\ \overline{D}_{R}:=(\overline{D})_{R},\ R>0.$$	
\end{notation}

We extend the above definition to the case $D=\mathbb{C}^+$ by stipulating that $\overline{D}_R=\{\zeta\in\mathbb C^+: \Re(\zeta)>R\}$. For an admissible domain $D\subseteq \mathbb C$, the maximal $f$-invariant subdomain $D^f\subseteq D$ can be, in general, empty. The following proposition gives sufficient conditions such that $D^f$ and $D_R^f$, for every $R>0$, are non-empty.

\begin{prop}
	\label{prop:invariance of D} Let $\beta\in\mathbb C^+$, $\varepsilon>0$ and $k\in\mathbb{N}$. 
	Let $D\subseteq \mathbb C^+$ be an admissible domain of
	type $(\beta,\varepsilon,k)$. Let $f:D_C\to\mathbb C$, $C>\exp^{\circ k}(0)$,
	be an analytic map, such that
	\begin{equation}\label{eq:asy}
	f(\zeta)=\zeta+\beta +o(\zeta^{-1}\boldsymbol{L}_{1}^{-1}\cdots\boldsymbol{L}_{k}^{-\left(1+\varepsilon\right)}),\ 
	\text { as }\mathrm{\Re}(\zeta)\to +\infty \text{ uniformly on $D_C$}.
	\end{equation}
	Here, \[
	\boldsymbol{L}_{1}:=\log\left(\zeta\right),\ldots,\boldsymbol{L}_{k}:=\log\left(\boldsymbol{L}_{k-1}\right),
	\]
	where $\log$ represents the principal branch of the logarithm\footnote{Note here that, for $C>\mathrm{exp}^{\circ k}(0)$, the iterated logarithms $\boldsymbol{L}_1,\ldots,\boldsymbol L_k$ are well-defined on $D_C$ (using only the principal branch of the logarithm), since $\Re(\zeta)>\mathrm{exp}^{\circ k}(0)$.}.
	Then, for every $R>C$
	sufficiently large, the domain $\overline{D}_{R}$ is $f$-invariant.
	In particular, $\overline{D}_{R}\subseteq{D}_{R}^f$ and $D_R^f\neq \emptyset$. 
\end{prop}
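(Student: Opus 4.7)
The plan is to show that for $R$ sufficiently large, every point $\zeta \in \overline{D}_R$ is mapped by $f$ into the same $D_{h_l, h_u}$ piece of $\overline{D}$ that contains it. Since $\overline{D}$ is a union of such $D_{h_l,h_u}$ (by definition and Remark~\ref{rem:union}), this will yield $f(\zeta)\in\overline{D}$, and the estimate on $\Re f(\zeta)$ below will give $\Re f(\zeta)\geq R$, hence $f(\zeta)\in\overline{D}_R$.

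First, I would translate the hypothesis \eqref{eq:asy} into a convenient pointwise bound. Writing $f(\zeta)=\zeta+\beta+\delta(\zeta)$, one has $|\zeta^{-1}\boldsymbol{L}_1^{-1}\cdots\boldsymbol{L}_k^{-(1+\varepsilon)}|\leq M_{\varepsilon,k}(\Re\zeta)$, since $|\zeta|\geq\Re\zeta$ and consequently $|\boldsymbol{L}_j(\zeta)|\geq \boldsymbol{L}_j(\Re\zeta)$ for each $j=1,\ldots,k$ (iterating the inequality $|\log w|\geq\log|w|$ on the principal branch). Uniformity of the $o$-asymptotic then lets me fix $R\geq C$ large enough that
\[
|\delta(\zeta)|\leq M_{\varepsilon,k}(x)\quad\text{and}\quad \rho_{\beta,\varepsilon,k}^-(x)>0\quad\text{for every }\zeta=x+\mathrm{i}y\text{ with }x\geq R,
\]
so that
\[
x+\rho_{\beta,\varepsilon,k}^-(x)\leq \Re f(\zeta)\leq x+\rho_{\beta,\varepsilon,k}^+(x),\quad y+\Im\beta-M_{\varepsilon,k}(x)\leq \Im f(\zeta)\leq y+\Im\beta+M_{\varepsilon,k}(x).
\]
In particular $\Re f(\zeta)\geq R$.

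The core step is the strip inclusion $h_l(\Re f(\zeta))<\Im f(\zeta)<h_u(\Re f(\zeta))$, proved by a case analysis mirroring (i)--(iii). For the upper bound when $\Im\beta\geq 0$: $h_u$ is increasing, so applying condition~(3) at $x$ gives
\[
h_u(\Re f(\zeta))\geq h_u\bigl(x+\rho_{\beta,\varepsilon,k}^-(x)\bigr)\geq h_u(x)+\Im\beta+M_{\varepsilon,k}(x)>y+\Im\beta+M_{\varepsilon,k}(x)\geq \Im f(\zeta).
\]
When $\Im\beta<0$, if $h_u$ is decreasing I use instead $\Re f(\zeta)\leq x+\rho_{\beta,\varepsilon,k}^+(x)$ together with condition~(3) of case~(iii); if $h_u$ is increasing, the elementary bound $\Im f(\zeta)\leq y+\Im\beta+M_{\varepsilon,k}(x)<y<h_u(x)\leq h_u(\Re f(\zeta))$ suffices for $R$ large. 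The lower bound is handled symmetrically: one applies condition~(2) to $h_l$ at the appropriate endpoint of the interval $[\rho^-,\rho^+]$ dictated by the monotonicity of $h_l$, enlarging $R$ so that $|\Im\beta|>M_{\varepsilon,k}(R)$ whenever needed to absorb the decreasing-$h_l$ subcase.

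Combining the strip inclusion with $\Re f(\zeta)\geq R\geq t$ yields $f(\zeta)\in D_{h_l,h_u}\subseteq \overline{D}_R$, which gives $f$-invariance of $\overline{D}_R$, and in particular $\overline{D}_R\subseteq D_R^f$, so $D_R^f\neq\emptyset$. The only real obstacle is the bookkeeping over the three sign-cases for $\Im\beta$ and the sub-cases on the monotonicity of $h_l, h_u$; each verification is a direct one-line consequence of the corresponding defining inequality, but one has to align which of $\rho^\pm_{\beta,\varepsilon,k}(x)$ yields the worst-case argument for each monotonicity branch.
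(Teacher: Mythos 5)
Your proposal is correct and follows essentially the same route as the paper's proof: translate the uniform asymptotics into the pointwise bound $|f(\zeta)-\zeta-\beta|\leq M_{\varepsilon,k}(\Re\zeta)$ (via $|\boldsymbol{L}_m(\zeta)|\geq\log^{\circ m}(\Re\zeta)$), deduce the four rectangle estimates on $\Re f(\zeta)$ and $\Im f(\zeta)$, and then use the defining inequalities (2)--(3) of a lower-upper pair to show the image stays inside the same $D_{h_l,h_u}$, hence inside $\overline{D}_R$. The only cosmetic difference is that where you spell out the sign-case verifications explicitly, the paper packages the four estimates into the rectangle $\mathcal{S}_{\beta,\varepsilon,k}(\zeta)$ and argues its inclusion in $D_{h_l,h_u}$ with a reference to the figure, deferring precisely the same case-by-case bookkeeping that you carry out.
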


\begin{proof}
	By asymptotics \eqref {eq:asy},
	\begin{align}\label{eq:asylog}
		{\displaystyle \lim_{\Re(\zeta)\to+\infty}\frac{f(\zeta)-\left(\zeta+\beta \right)}{\zeta^{-1}\boldsymbol{L}_{1}^{-1}\cdots\boldsymbol{L}_{k}^{-\left(1+\varepsilon\right)}}} & =0,
	\end{align}
	uniformly on $D$. 
	
	Let $\rho_{\beta,\varepsilon,k}^\pm$ and $M_{\varepsilon,k}$ be as defined in \eqref{eq:rho}. By \eqref{eq:asylog}, there exists $R>\exp^{\circ k}(0)$
	such that $\rho _{\beta , \varepsilon ,k}^{-}(R)>0$, $\rho _{\beta , \varepsilon ,k}^{-}$ is increasing on $[R,+\infty)$ and, for all $\zeta\in D_R$,
	\begin{equation}
	\left|f(\zeta)-\left(\zeta+\beta \right)\right|\leq\frac{1}{\left|\zeta\boldsymbol{L}_{1}\cdots\boldsymbol{L}_{k}^{1+\varepsilon}\right|}.\label{eq:new}
	\end{equation}
	Since $R>\exp^{\circ k}(0)$
	and $\left|\log\zeta\right|\geq\log\left|\zeta\right|\geq\log\left(\left|\Re\left(\zeta\right)\right|\right)=\log\left(\Re\left(\zeta\right)\right)$,
	we inductively get:
	\begin{align}\label{eq:new1}
		|\boldsymbol{L}_{m}| & \ge\log^{\circ m}(\Re\left(\zeta\right)),\text{ for }1\leq m\leq k,\ \zeta\in D_R.
	\end{align}
	Now, by \eqref{eq:new} and \eqref{eq:new1}, we get, for $\zeta\in D_R$:
	\begin{equation}
	|f(\zeta)-\left(\zeta+\beta \right)|\leq\frac{1}{\Re\left(\zeta\right)\cdot\log(\Re\left(\zeta\right))\cdots(\log^{\circ k}(\Re(\zeta)))^{1+\varepsilon}}.\label{PropEq1}
	\end{equation}
	Therefore, for $\zeta\in D_R$:
	\begin{align}
		\Re(f(\zeta))-\Re(\zeta) & \ge \Re(\beta)  -\frac{1}{\Re\left(\zeta\right)\cdot\log\left(\Re\left(\zeta\right)\right)\cdots(\log^{\circ k}(\Re(\zeta))){}^{1+\varepsilon}} \nonumber \\
		& =\rho_{\beta ,\varepsilon,k}^{-}\left(\Re\left(\zeta\right)\right) , \label{EqRealPart}
	\end{align}
	
	\begin{align}
		\Re(f(\zeta))-\Re(\zeta) & \leq \Re(\beta)  +\frac{1}{\Re\left(\zeta\right)\cdot\log\left(\Re\left(\zeta\right)\right)\cdots(\log^{\circ k}(\Re(\zeta))){}^{1+\varepsilon}}\nonumber\\
		& =\rho_{\beta ,\varepsilon,k}^{+}\left(\Re\left(\zeta\right)\right) ,\label{EqRealPart1}
	\end{align}
	and

	\begin{align}
		\Im(f(\zeta))-\Im(\zeta) & \geq \Im(\beta) -\frac{1}{\Re\left(\zeta\right)\cdot\log\left(\Re\left(\zeta\right)\right)\cdots(\log^{\circ k}(\Re(\zeta)))^{1+\varepsilon}}\label{eq:new3}\\
		& =\Im(\beta) - M_{\varepsilon,k}(\Re\left(\zeta\right)), \nonumber 
	\end{align}
	\begin{align}
		\Im(f(\zeta))-\Im(\zeta) & \leq \Im(\beta) +\frac{1}{\Re\left(\zeta\right)\cdot\log\left(\Re\left(\zeta\right)\right)\cdots(\log^{\circ k}(\Re(\zeta)))^{1+\varepsilon}}\label{eq:new2}\\
		& =\Im(\beta) + M_{\varepsilon,k}(\Re\left(\zeta\right)). \nonumber 
	\end{align}
	Since $\rho_{\beta,\varepsilon,k}^-$ is an increasing function, it follows that  $\rho_{\beta ,\varepsilon,k}^{-}(\Re\left(\zeta\right))\geq\rho_{\beta ,\varepsilon,k}^{-}(R)>0$, for every $\zeta\in D_R$. 
	Let, for $\zeta\in D_R$,
	\begin{align*}
		\mathcal{S}_{\beta ,\varepsilon,k}(\zeta) & :=\left[ \Re(\zeta) + \rho _{\beta , \varepsilon ,k}^{-}(\Re(\zeta)),\, \Re(\zeta)+\rho_{\beta ,\varepsilon,k}^{+}(\Re(\zeta))\right]\\
		& \times\left[\Im (\zeta )+\Im(\beta) -M_{\varepsilon,k}(\Re(\zeta)),\Im (\zeta )+\Im(\beta) +M_{\varepsilon,k}(\Re(\zeta))\right].
	\end{align*}
	By \eqref{EqRealPart}-\eqref{eq:new2}, we get that, for $\zeta\in D_R$,
	\[
	f(\zeta)\in\mathcal{S}_{\beta ,\varepsilon,k}(\zeta).
	\]
	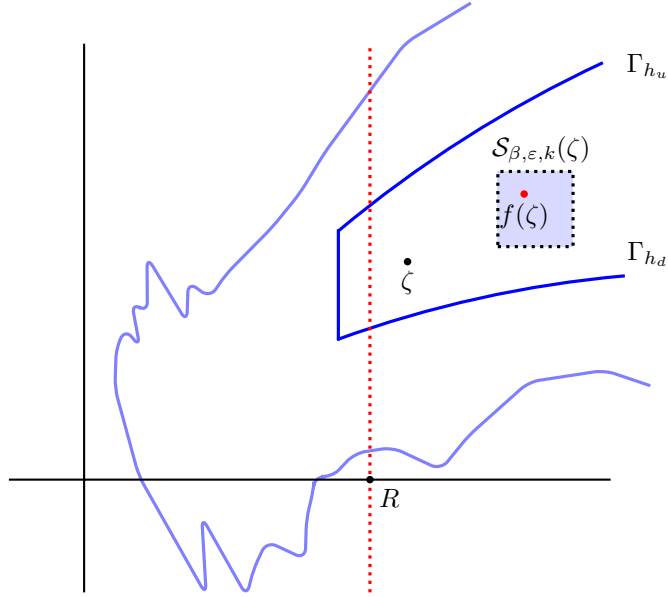
\begin{figure}[h!]
		\centering
		\begin{tikzpicture}
		\draw[blue!50!white,very thick,rounded corners,rotate=20] (7.5,-1.4) -- (7,-1) -- (6,-0.8) -- (5,-1.2) -- (4.5,-1.5) -- (4,-1) -- (3.5,-0.9) -- (3.1,-1.1) -- (2.9,-1) -- (2.8,-1.2) -- (2.6,-1.5) -- (2.4,-2) -- (2.2,-1.8) -- (2,-1.6) -- (1.7,-1.9) -- (1.5,-2.2) -- (1.3,-1) -- (1,-2) -- (0.7,-0.2) -- (0.8,1) -- (1,1.5) -- (1.2,1.7) -- (1.3,1.4) -- (1.35,1.7) -- (1.4,2) -- (1.55,1.8) -- (1.8,2.5) -- (2,1.5) -- (2.2,2) -- (2.4,1.7) -- (2.7,1.92) -- (3.5,2.3) -- (4,2.6) -- (4.5,3) -- (6,4) -- (7,4.2);
		\filldraw[draw=blue,very thick,fill=white,rotate=20] (4.3,1.95) arc (110:95:16cm);
		\filldraw[draw=blue,very thick,fill=white,rotate=20] (3.8,0.6) arc (90:75:15cm);
		\draw[blue,very thick] (3.38,1.85) -- (3.38,3.33);
		\draw[red,very thick,dotted] (3.8,-1.5) -- (3.8,5.8);
		\filldraw[draw=black,very thick,fill=blue!15!white,dotted] (5.5,3.1) rectangle (6.5,4.1);
		\draw[black,thick] (-1,0) -- (7,0) (0,-1.5) -- (0,5.8);
		\draw (4.3,2.9) node[draw=none,fill=black,circle,scale=0.3]{} (4.3,2.9) node[below]{$\zeta $} (5.85,3.8) node[below]{$f(\zeta )$} (5.85,3.8) node[draw=none,fill=red,circle,scale=0.3]{} (3.8,0) node[below right]{$R$} (3.8,0) node[draw=none,fill=black,circle,scale=0.3]{} (5.3,4.7) node[below right]{$\mathcal{S}_{\beta ,\varepsilon ,k}(\zeta )$} (7.5,5.5) node{$\Gamma _{h_{u}}$} (7.5,3) node{$\Gamma _{h_{d}}$};
		\end{tikzpicture}
		\caption{For $R$ sufficiently large, $\mathcal{S}_{\beta ,\varepsilon,k}(\zeta)\subseteq\overline{D}_{R}$,
			$\zeta\in\overline{D}_{R}$.}
		\label{fig:pict1}
	\end{figure}

	Now take $\zeta \in \overline{D}_{R}$. It is left to prove that then $f(\zeta)\in \overline{D}_R$, that is, that $\overline D_R$ is $f$-invariant. By definition of $\overline D_R$, there exists a $(\beta,\varepsilon,k)$-domain $(D_{h_{l},h_{u}})_R\subseteq\overline{D}_R$,
	such that $\zeta\in (D_{h_{l},h_{u}})_R$. Now, by properties $(2)$ and $(3)$ in the definition of lower-upper pair of type $(\beta, \varepsilon ,k)$, it
	follows that $\mathcal{S}_{\beta ,\varepsilon,k}(\zeta)\subseteq D_{h_{l},h_{u}}$, see Figure~\ref{fig:pict1}. 
	Therefore, $f(\zeta)\in D_{h_{l},h_{u}}\subseteq\overline{D}$. Since $\zeta\in D_R$ and $\rho_{\beta ,\varepsilon,k}^{-}(\Re(\zeta))>0$ for $\zeta\in D_R$, by \eqref{EqRealPart} it follows that $\Re(f(\zeta ))>\Re(\zeta)\geq R$. Therefore, $f\in \overline D_R$. 
\end{proof}
\medskip

We now state the main result of this section, which establishes
the convergence of Koenigs' sequence for a holomorphic map with a logarithmic-type bound on asymptotic behavior on an admissible domain: 
\begin{namedthm}
	{Theorem A} Let $\beta\in\mathbb C^+$, $\varepsilon>0$, $k\in\mathbb{N}$. Let $D\subseteq\mathbb C^+$
	be an admissible domain of type $(\beta ,\varepsilon,k)$. For $C>\exp^{\circ k}(0)$, let $f:D_C\to\mathbb C$ 
	be an analytic map such that
	\begin{equation}\label{eq:condit}
	f(\zeta)=\zeta+\beta +o(\zeta^{-1}\boldsymbol{L}_{1}^{-1}\cdots\boldsymbol{L}_{k}^{-\left(1+\varepsilon\right)}),\ 
	\text { as }\mathrm{\Re}(\zeta)\to +\infty \text{ uniformly on $D_C$}.
	\end{equation}
	Here, the iterated logarithms $
	\boldsymbol{L}_{1},\ldots \boldsymbol{L}_k$ are defined as in Proposition~\ref{prop:invariance of D}. Then:
	\begin{enumerate}[(i), font=\textup, nolistsep,leftmargin=0.6cm]
		\item \emph{(Existence)} For a sufficiently large $R>\exp^{\circ k}\left(0\right)$ there exists
		an analytic linearizing map $\varphi$ on the $f$-invariant
		subdomain $D_{R}^f\subseteq D$. That is, $\varphi$ 
		satisfies 
		\begin{align}
			(\varphi\circ f)(\zeta) & =\varphi(\zeta)+\beta,\text{ for all }\zeta\in{D}_{R}^f.\label{eq:linera-1}
		\end{align}
		Moreover, $\varphi$ is the uniform limit on $D_R^f$ of the Koenigs sequence
		$$(f^{\circ n}-n\beta)_{n}.$$
		\item If $D_R^f\cap \{\zeta\in\mathbb C^+:\Im(\zeta)=0\}$ is $f$-invariant, 
		so is $\varphi
		$-invariant.
		\item {\em(Asymptotics)} The linearization $\varphi$ is \emph{tangent to identity}, i.e. $\varphi (\zeta)=\zeta+o(1)$, uniformly on $D_{R}^f\subseteq \mathbb C^{+}$, as $\Re(\zeta)\to +\infty$. \\
		In particular, $\varphi(\zeta)=\zeta +o(\boldsymbol{L}_{k}^{-\nu})$, 
		for every $\nu\in\left(0,\varepsilon\right)$, uniformly as $\Re(\zeta)\to +\infty$,
		on every subdomain $D_{h_{l},h_{u}}\subseteq D_R^f$
		such that $h_{l}(x)=O(x)$ and $h_{u}(x)=O(x),\ x\to +\infty$. 
		\item \emph{(Uniqueness)} Let $\psi:D_{1}\to\mathbb C$, be
		a linearization of $f$ on an $f$-invariant subset $D_{1}\subseteq D$, such that $\psi(\zeta)=\zeta+o(1)$ uniformly on $D_{1}$, as $\Re(\zeta)\to +\infty$. Then $\psi\equiv \varphi$ on
		$(D_{1})_{R}$. 
	\end{enumerate}
\end{namedthm} 
\begin{rem}\
	
	(1)	Theorem A can be seen as a generalization to complex domains, expressed in the logarithmic chart $\zeta=-\log(z)$, of the results proved in Dewsnap-Fisher \cite{dewsnap-fischer:convergence-koenigs-sequences} for real maps.
	
	(2)	Note that the condition \eqref{eq:condit} for linearizability is natural. It was indeed proven in \cite{prrs:normal_forms_hyperbolic_logarithmic_transseries} that a hyperbolic logarithmic formal transseries $\widehat f\in\mathcal L_k$ (i.e. with monomials in variables $z,\boldsymbol\ell_1,\ldots,\boldsymbol\ell_k$, where $\boldsymbol{\ell }_{1}:=-\frac{1}{\log z}$, and inductively $\boldsymbol{\ell }_{i}:=\boldsymbol{\ell }_{1}\circ \boldsymbol{\ell }_{i-1}$, for $2\leq i\leq k$), $k\in\mathbb N_{\geq 1},$ is formally linearizable if and only if the leading monomial of $\widehat f-\lambda z$ is a logarithmic monomial of type $o(z\boldsymbol{\ell}_{1}\cdots\boldsymbol{\ell}_{m}^{1+\varepsilon})$ for some $m\in\mathbb N_{\geq 1}$, $1\leq m\leq k$, and $\varepsilon>0$. On the other hand, the monomials up to $z\boldsymbol{\ell}_{1}\cdots\boldsymbol{\ell}_{k}$ (included) \emph{cannot} be eliminated from the normal form of $\widehat f$. 
	
\end{rem}

\begin{example*}[Examples for Theorem A]
	Although Theorem A may be applied to \emph{any} function satisfying the condition \eqref{eq:condit}, we illustrate how this theorem works on some examples of functions that have an expansion in the logarithmic scale.
	
	(1) Let $$f(\zeta)=\zeta+2+3\pi\mathrm{i}+\zeta^{-1}\boldsymbol L_1^{-2}+\zeta^{-2}\boldsymbol L_2^{2}+\frac{\mathrm{e}^{-\zeta}}{1-\mathrm{e}^{-\zeta}\boldsymbol L_1}.$$ By Proposition~\ref{prop:invariance of D}, $f$ is well-defined on a domain $D_R\subseteq\mathbb C^+$ where $D$ is an $(\beta,\varepsilon,2)$-admissible domain, with $\beta=2+3\pi\mathrm{i}$ and $\varepsilon>0$, and $R>\mathrm{exp}(\exp(0))={\rm e}$. Consider for example any \emph{standard quadratic domain} $D$ as in Example $(3)$, intersected with the right half plane $\{\zeta\in\mathbb C^{+}: \ \Re(\zeta)>R\}$. By Theorem A, for sufficiently large $R>{\rm e}$, $D_R$ is $f$-invariant and $f$ is linearizable by a tangent to the identity change $\varphi(\zeta)=\zeta+o(1)$, analytic on $D_R$:
	$$
	\varphi\circ f=\varphi+2+3\pi\mathrm{i} \text { on } D_R. 
	$$ 
	
	(2) Let
	\begin{align}\label{eq:eff}
		f(\zeta)=\zeta+2+3\pi\mathrm{i}+\boldsymbol L_1^{-1}+\zeta^{-1} \boldsymbol L_2^{-1} \boldsymbol  L_3+&4\zeta^{-1} \boldsymbol L_1^{-1}\boldsymbol L_2^{-1} \boldsymbol L_3+\nonumber\\
		&+\zeta^{-2}+\mathrm{e}^{-\zeta}\boldsymbol L_1^2+\mathrm{e}^{-2\zeta},
	\end{align}
	on some $D_R$, where $D$ is a $(\beta,\varepsilon,3)$-admissible domain with $\beta=2+3\pi\mathrm{i}$, $\varepsilon>0$, and $R>\mathrm{exp}^{\circ 3}(0)$ sufficiently big so that $\overline{D}_R$ is invariant for $f$.
	
	By \cite{prrs:normal_forms_hyperbolic_logarithmic_transseries}\footnote{Written in the $z$-chart, $z=\mathrm{e}^{-\zeta}$, $f$ given in \eqref{eq:eff} is a series of monomials in $z,\boldsymbol\ell_1,\boldsymbol\ell_2$, $\boldsymbol\ell_3$ of the form $\tilde f(z):=\mathrm{e}^{-f(-\log z)}=\mathrm{e}^{-2-3\pi \mathrm{i}} z(1+(\mathrm{e}^{-\boldsymbol\ell_2}-1)-\boldsymbol\ell_1\boldsymbol\ell_3\boldsymbol\ell_4^{-1}-3\boldsymbol\ell_1\boldsymbol\ell_2\boldsymbol\ell_3\boldsymbol\ell_4^{-1})+o(z\boldsymbol\ell_1\boldsymbol\ell_2\boldsymbol\ell_3\boldsymbol\ell_4^{1+\varepsilon})$, $\varepsilon>0$. Also, in the $z$-chart, $f_0$ from \eqref{eq:nf} is of the form $\tilde f_0(z):=\mathrm{e}^{-f_{0}(-\log z)}=\mathrm{e}^{-2-3\pi\mathrm{i}} z(1+(\mathrm{e}^{-\boldsymbol\ell_2}-1)-\boldsymbol\ell_1\boldsymbol\ell_3\boldsymbol\ell_4^{-1}-3\boldsymbol\ell_1\boldsymbol\ell_2\boldsymbol\ell_3\boldsymbol\ell_4^{-1})+o(z\boldsymbol\ell_1\boldsymbol\ell_2\boldsymbol\ell_3\boldsymbol\ell_4^{1+\varepsilon})$. By \cite{prrs:normal_forms_hyperbolic_logarithmic_transseries}, $\tilde f$ can be formally reduced to $\tilde f_0$; therefore, $f$ can be formally reduced to $f_0$ (by the same conjugacy written in the logarithmic chart). On the other hand, suppose that $f_0$ from \eqref{eq:nf} can be further reduced, or any of its coefficients changed, to, say, $ f_1$. This would imply that, also in $z$-chart, $\tilde f$ can be normalized to $\tilde f_1(z):=\mathrm{e}^{- f_1(-\log z)}$. However, it is easy to check that $\tilde f_1$ and $\tilde f_0$ here defined differ in terms before the monomial $z\boldsymbol\ell_1\boldsymbol\ell_2\boldsymbol\ell_3\boldsymbol\ell_4^{1+\varepsilon}$, which is a contradiction with normalization result in \cite{prrs:normal_forms_hyperbolic_logarithmic_transseries}, which states that terms before $z\boldsymbol\ell_1\boldsymbol\ell_2\boldsymbol\ell_3\boldsymbol\ell_4^{1+\varepsilon}$ in $\tilde f_0$ are not affected by formal changes of variables.}, if we consider $f$ as a \emph{formal} series in variables $\zeta^{-1},\boldsymbol L_1,\boldsymbol L_2,\boldsymbol L_3$ and $\mathrm{e}^{-\zeta}$, $f$ is not \emph{formally} linearizable by any logarithmic transseries. Nevertheless, it can be formally reduced to a  normal form
	\begin{equation}\label{eq:nf}
		f_0(\zeta)=\zeta+2+3\pi\mathrm{i}+\boldsymbol L_1^{-1}+\zeta^{-1} \boldsymbol L_2^{-1} \boldsymbol L_3+4\zeta^{-1} \boldsymbol L_1^{-1}\boldsymbol L_2^{-1} \boldsymbol L_3,
	\end{equation}
	where the terms strictly before the term $\zeta^{-1}\boldsymbol L_1^{-1}\boldsymbol L_2^{-1}\boldsymbol L_3^{-1-\varepsilon}$, for any $\varepsilon>0$, cannot be eliminated from the normal form by formal changes of variables.
	
	Note that Theorem A does not claim the analytic linearization of $f$ on $\overline{D}_R$, since $f$ does not satisfy \eqref{eq:condit}. Although Theorem A is not an ``if and only if'' statement, formal non-linearizability is a good indication that analytic linearization of $f$ on $\overline{D}_R$ may not be possible. We conjecture instead that the proof of Theorem~A can be adapted to the analytic normalization of $f$ to $f_0$ on $\overline{D}_R$ in this case. We omit it here, since the use of techniques similar to the proof of Theorem A is to be expected, but with more technical complications.
\end{example*}

\begin{proof}[Proof of Theorem A]\
	
	(i) By Proposition \ref{prop:invariance of D},
	there exists $R>\exp^{\circ k}\left(0\right)$ sufficiently large, such
	that $\overline{D}_{R}\subseteq{D}_{R}^f$, where $D^f$
	is the maximal $f$-invariant subdomain of $D\subseteq\mathbb C^+$.
	Therefore, ${D}_{R'}^f\neq\emptyset$, for all $R'\ge R$.
	Let $\zeta\in{D}_{R}^f$ and let $\rho_{\beta,\varepsilon,k}^\pm$ be the increasing functions defined in \eqref{eq:rho}. From \eqref{EqRealPart}, for $\zeta\in D_R^f$ with $R$ sufficiently large, since $\rho_{\beta,\varepsilon,k}^-$ is increasing on $[R,+\infty)$, it follows that:
	\begin{equation}\label{Equat1}
	\Re(f^{\circ n}(\zeta))\geq \Re\left(\zeta\right)+n\rho_{\beta ,\varepsilon,k}^{-}(\Re\left(\zeta\right))\geq R+n\rho_{\beta,\varepsilon,k}^{-}(R),\ n\in\mathbb N_{\geq 1}.
	\end{equation}
	By \eqref{PropEq1}, for $\zeta\in D_R^f$ it holds that:
	\begin{equation}\label{Equati2}\left|f(\zeta)-\left(\zeta+\beta \right)\right|\leq\frac{1}{\Re\left(\zeta\right)\cdot\log(\Re\left(\zeta\right))\cdots(\log^{\circ k}(\Re\left(\zeta\right)))^{1+\varepsilon}}=M_{\varepsilon,k}(\Re\left(\zeta\right)).
	\end{equation}
	From \eqref{Equat1} and \eqref{Equati2} and since $M_{\varepsilon,k}$ is decreasing on $[R,+\infty)$, we inductively obtain, for $n\in\mathbb N_{\geq 1}$ and $\zeta\in D_R^f$:
	\begin{align}\label{eq:joj}
		\left|f^{\circ(n+1)}(\zeta)-\left(n+1\right) \beta -\left(f^{\circ n}(\zeta)-n\beta \right)\right| & =\left|f\left(f^{\circ n}(\zeta)\right)-\left(f^{\circ n}(\zeta)+\beta \right)\right| \nonumber \\
		& \leq M_{\varepsilon,k}\bigl(\Re(f^{\circ n}(\zeta))\bigr)\nonumber \\
		& \leq M_{\varepsilon,k}\left(\Re\left(\zeta\right)+n\rho_{\beta,\varepsilon,k}^{-}(\Re\left(\zeta\right))\right)\nonumber \\
		& \leq M_{\varepsilon,k}\left(R+n\rho_{\beta ,\varepsilon,k}^{-}(R)\right).
	\end{align}
	As stated in Subsection~\ref{subsec:admi}, the series $\sum_{n\geq0}M_{\varepsilon,k}\big(R+n\rho_{\beta,\varepsilon,k}^{-}(R)\big)$
	converges. Therefore, the Koenigs sequence $(f^{\circ n}-n\beta )_{n}$ is uniformly Cauchy, hence converges uniformly on $D_R^f$. Denote by $\varphi$ its  uniform limit on the domain ${D}_{R}^f$. By Weierstrass' theorem, it follows that
	$\varphi$ is analytic on  ${D}_{R}^f$. 
	
	Finally, we compute:
	\begin{align*}
		(\varphi\circ f)(\zeta) & =\lim_{n}\left(f^{\circ n}(f(\zeta))-n \beta\right)\\
		& =\lim_{n}\left(f^{\circ(n+1)}(\zeta)-(n+1)\beta \right)+\beta \\
		& =\varphi(\zeta)+\beta .
	\end{align*}
	Therefore, $\varphi$ is an analytic linearization of $f$ on $D_R^f$, obtained as a uniform limit of the Koenigs sequence. \\
	
	(ii) Suppose that $D_R^f\cap \{\zeta\in\mathbb C: \Im(\zeta)=0\}$ is invariant under $f(\zeta)=\zeta+\beta+o(1)$. Then obviously $\Im(\beta)=0$. Now consider the pointwise limit 
	\begin{equation}\label{eq:fi}
	\varphi(\zeta):=\lim_{n\to\infty}(f^{\circ n}(\zeta)-n\beta),\ \zeta \in D_R^f\cap \{\zeta\in\mathbb C: \Im(\zeta)=0\}.
	\end{equation}
	Since $D_R^f\cap \{\zeta\in{\mathbb C}^+: \Im(\zeta)=0\}$ is invariant for (all iterates of) $f$, 
	$\{\zeta \in \mathbb{C} : \Im(\zeta) =0\}$
	closed in $\mathbb{C}^+$, and since $\Im(\beta)=0$, \eqref{eq:fi} implies that $\{D_R^f\cap \{\zeta\in\mathbb C: \Im(\zeta)=0\}$ is invariant for $\varphi$. \\
	
	(iii) For $0<\nu<\varepsilon$, $\zeta\in{D}_{R}^f$ and $m\in\mathbb{N}_{\geq1}$, taking the sum of the terms
	\[f^{\circ(n+1)}(\zeta)-\left(n+1\right) \beta -\left(f^{\circ n}(\zeta)-n\beta \right)
	\]
	in \eqref{eq:joj} for $n$ ranging from $0$ to $m-1$ 
	it follows that
	\begin{align}
		\left|f^{\circ(m)}(\zeta)-m\beta -\zeta\right|&\leq\sum_{n=0}^{m-1}M_{\varepsilon,k}\left(\Re\left(\zeta\right)+n\rho_{\beta ,\varepsilon,k}^{-}(\Re\left(\zeta\right))\right) \nonumber \\
		& =\sum_{n=0}^{m-1}\frac{M_{\nu,k}\left(\Re\left(\zeta\right)+n\rho_{\beta ,\varepsilon,k}^{-}(\Re\left(\zeta\right))\right)}{\left(\log^{\circ k}\left(\Re\left(\zeta\right)+n\rho_{\beta ,\varepsilon,k}^{-}\left(\Re\left(\zeta\right)\right)\right)\right)^{\varepsilon-\nu}}  \nonumber \\
		& \leq\frac{1}{\left(\log^{\circ k}\left(\Re\left(\zeta\right)\right)\right)^{\varepsilon-\nu}}\cdot\sum_{n=0}^{+\infty}M_{\nu,k}(R+n\rho_{\beta , \varepsilon,k}^{-}(R))\nonumber\\
		&\leq \frac{C}{\left(\log^{\circ k}\left(\Re\left(\zeta\right)\right)\right)^{\varepsilon-\nu}},\ C>0,\label{eqConvergence}
	\end{align}
	where the last sum converges to $C>0$. Taking the pointwise limit for ${m\to+\infty}$ in \eqref{eqConvergence}, it follows that 
	\begin{equation}\label{eq:hahh}
	\left|\varphi(\zeta) -\zeta\right|\leq \frac{C}{\left(\log^{\circ k}\left(\Re\left(\zeta\right)\right)\right)^{\varepsilon-\nu}},\ C>0,\ \zeta\in D_R^f.
	\end{equation}
	Therefore, 
	$\varphi(\zeta )=\zeta + o(1)$, uniformly on $D_{R}^f$ as $\Re (\zeta )\to + \infty $. 
	\medskip
	
	To get a more rigorous estimate, using the elementary properties of the logarithm, we easily see that,
	for every domain $D_{h_{l},h_{u}}\subseteq\mathbb C_+$
	such that $h_{l}(x)=O(x)$ and $h_{u}(x)=O(x)$, as $x\to+\infty$, there exists $N>0$, such that
	\begin{align}
		|\boldsymbol{L}_{k}| & \leq N\log^{\circ k}\left(\Re\left(\zeta\right)\right),\text{ for }\zeta\in (D_{h_{l},h_{u}})_R, \label{eqLogarit}
	\end{align}
	for sufficiently large $R>\exp^{\circ k}\left(0\right)$, $k\in\mathbb N_{\geq 1}$. Indeed, the conditions $h_{l}(x)=O(x)$ and $h_{u}(x)=O(x)$ imply that there exists $d>0$ such that $|\Im( \zeta)|\leq d\cdot  \Re(\zeta),\ \zeta\in (D_{h_l,h_u})_R$, for a sufficiently large $R>0$.
	
	Using \eqref{eqLogarit} in \eqref{eq:hahh}, it now follows that, for any $0<\nu<\nu'<\varepsilon$, there exists $E>0$ such that:
	\begin{equation}
	\left|\varphi(\zeta)-\zeta\right|\leq \frac{E}{\left(\log^{\circ k}\left(\Re\left(\zeta\right)\right)\right)^{\nu'-\nu}}|\boldsymbol{L}_k|^{\nu'-\varepsilon},\ \zeta\in (D_{h_{l},h_{u}})_R.
	\end{equation}
	This implies that, for every $0<\nu'<\varepsilon,$ on $(D_{h_{l},h_{u}})_R$ it holds that:
	\begin{align*}
		\lim_{\Re (\zeta) \to +\infty}\frac{\varphi(\zeta)-\zeta}{\frac{1}{\left(\boldsymbol{L}_{k}\right)^{\varepsilon-\nu'}}} & =0.
	\end{align*}
	Therefore, for any $0<\nu<\varepsilon$, $\varphi(\zeta)=\zeta+o\left(\boldsymbol{L}_{k}^{-\nu}\right)$, as $\Re(\zeta)\to +\infty$ in $(D_{h_{l},h_{u}})_R$. 
	\\
	
	(iv) Suppose that $\psi$ is an analytic linearizing germ,
	i.e. $\psi\circ f=\psi+\beta$, on $f$-invariant subset $D_{1}\subseteq D$, such that $\psi (\zeta)=\zeta+o(1)$ uniformly on $D_{1}$ as $\Re(\zeta)\to+\infty$. Since $D_1$ is $f$-invariant and $D^f$ is a maximal $f$-invariant subdomain of $D$, obviously $(D_{1})_{R}\subseteq {D}_{R}^f$. Clearly, $(D_1)_R$ is also $f$-invariant, and by \eqref{Equat1}, non-empty. Recall from (i) that $\varphi$ is the analytic linearization constructed on whole $D_{R}^f$ as the limit of the Koenigs sequence, for sufficiently large $R>\exp ^{\circ k}(0)$. It satisfies $\varphi(\zeta)=\zeta+o(1)$, uniformly as $\Re(\zeta)\to+\infty$ on $D_R^f$. We now show that $\psi\equiv\varphi$ on $(D_1)_R$.
	
	Put $$E(\zeta):=\varphi(\zeta)-\psi(\zeta),\ \zeta\in (D_{1})_{R}.$$ Then $E$ is analytic on $(D_{1})_{R}$ and $E(\zeta)=o(1)$, as $\Re(\zeta)\to +\infty$ uniformly on $(D_{1})_{R}$.
	Moreover, $(E\circ f)(\zeta)=E(\zeta)$, $\zeta\in (D_{1})_{R}$.
	Inductively, since $(D_1)_R$ is $f$-invariant, we obtain \begin{equation}\label{eq:contr}E(f^{\circ n}(\zeta))=E(\zeta),\ \zeta\in (D_{1})_{R},\ n\in\mathbb{N}.\end{equation} By \eqref{Equat1}, $\Re\left(f^{\circ n}(\zeta)\right)\geq R+n\rho_{\beta ,\varepsilon,k}^{-}(R)$, for $n\in \mathbb{N}$ and $\zeta\in (D_1)_R\subseteq D_R^f$. It follows that\begin{equation}\label{eq:dva} \lim_{n}\Re\left(f^{\circ n}(\zeta)\right)=+\infty,\ \zeta\in (D_1)_R.\end{equation}
	Passing to limit, as $n\to\infty$, in \eqref{eq:contr}, and using \eqref{eq:dva} and the fact that $E(\zeta)=o(1)$, as $\Re(\zeta)\to +\infty$,
	we get that $E(\zeta)=0,$ for every $\zeta\in (D_1)_R$. That is, $\varphi\equiv\psi$ on $(D_1)_R$.
\end{proof}

\section{\label{sec:linearization Dulac maps}Linearization of Dulac maps}

This final section is dedicated to the proof of our main result (Theorem
B, Section \ref{sec:notation main result}).

\subsection{Linearization of a hyperbolic Dulac series}

We prove in this subsection that a hyperbolic Dulac series \eqref{eq:Dulac}
admits a unique (formal) parabolic Dulac linearization. A key argument in the proof is the linearization of \emph{hyperbolic logarithmic transseries} established in \cite{prrs:normal_forms_hyperbolic_logarithmic_transseries}. Hence we first recall the notation used in \cite{prrs:normal_forms_hyperbolic_logarithmic_transseries}. Notice that, while the results there where established for logarithmic transseries with \emph{real coefficients}, we need here to consider certain logarithmic transseries with \emph{complex coefficients}. For the series considered here, as the proof consists of algebraic computations on the coefficients, switching from real to complex coefficients does not affect the results of \cite{prrs:normal_forms_hyperbolic_logarithmic_transseries}.   

\begin{notation}
	In this section, $z$ denotes an infinitesimal variable and the symbol $\boldsymbol{\ell}_{1}$ denotes the transmonomial $-\dfrac{1}{\log z}$.
\end{notation}

Following \cite{prrs:normal_forms_hyperbolic_logarithmic_transseries}, we describe a class of \emph{logarithmic transseries} in the variable $z$.
We denote by $\mathcal{L}_{1}(\mathbb C)$ the collection of logarithmic transseries
of the form
\begin{equation}
	\widehat{f}_1=\sum_{\left(\alpha,n\right)\in\mathbb{R}_{\geq0}\times\mathbb{Z}}
	a_{\alpha,n}\cdot z^{\alpha}\boldsymbol{\ell}_{1}^{n},
	\label{eq:series L1}
\end{equation}
where $\mathrm{Supp}(\widehat{f}_1):=\left\{ \left(\alpha,n\right)\in\mathbb{R}_{\geq0}\times\mathbb{Z}:a_{\alpha,n}\ne0\right\} $,
called the \emph{support} of $\widehat{f}_1$, is a \emph{well-ordered} subset
of $\mathbb{R}_{\geq0}\times\mathbb{Z}$ (for the lexicographic order),
and $a_{\alpha,n}\in\mathbb{C}$.   

For $\widehat{f}_1\in\mathcal{L}_{1}(\mathbb C)$, let $\text{ord}\left(\widehat{f}_{1}\right):=\min \, \mathrm{Supp}\left(\widehat{f}_{1}\right) \in\mathbb{R}_{\ge0}\times\mathbb{Z}$ if $\widehat{f}_1\ne0$ and $\mathrm{ord}(0)=+\infty$.
Let $\mathrm{ord}_{z}(\widehat{f}_1)$ be the smallest $\alpha\in\mathbb{R}_{\ge0}$
such that there exists $n\in\mathbb{Z}$ with $\left(\alpha,n\right)\in\mathrm{Supp}(\widehat{f}_1)$ if $\widehat{f}_1\neq0$ and $\mathrm{ord}_z(0)=+\infty$.
We also use the acronym $\mathrm{h.o.t.}$ for ``higher order terms''
when we describe a transseries.

Consider a transseries $\widehat{f}_1\in\mathcal{L}_{1}(\mathbb C)$ such that $\mathrm{ord}\left(\widehat{f}_{1}\right)=\left(1,0\right)$. If $a_{1,0}=1$, the transseries $\widehat{f}_1$ is called {\em parabolic}. If $|a_{1,0}|\ne0,1$, the series $\widehat{f}_1$ is called {\em hyperbolic}.

An element of $\mathcal{L}_{1}(\mathbb C)$ can also be written \emph{blockwise}:
\begin{equation}
\widehat{f}_1=\sum_{\alpha\in\mathbb{R}_{\geq0}}z^{\alpha}R_{\alpha}(\boldsymbol{\ell}_{1}),
\label{eq:write block}
\end{equation}
where each $R_{\alpha}$, called the \emph{block of
	index $\alpha$}, is an element of the field of Laurent series $\mathbb{C}\left((\boldsymbol{\ell}_{1})\right)$. 

To a parabolic transseries $\widehat{f}_{1}\left(z\right)$ as in \eqref{eq:write block} corresponds
the \emph{parabolic complex exponential transseries} in the infinite variable $\zeta=-\log z$
\begin{equation}
\widehat{f}\left(\zeta\right)=-\log\left(\widehat{f}_{1}\left(\exp\left(-\zeta\right)\right)\right)=\zeta+\sum_{\nu\ge0}\exp\left(-\nu\zeta\right)P_{\nu}\left(\zeta^{-1}\right),
\label{eq:parabolic-exponential}
\end{equation}
where, for each $\nu$, $P_{\nu}\in\mathbb{C}((X))$ is a Laurent series, and $\varepsilon\left(\zeta\right):=\widehat{f}\left(\zeta\right)-\zeta$
is an infinitesimal transseries. \\

A transseries in $\mathcal L_{1}(\mathbb{C})$ might fail to be a logarithmic Dulac series \eqref{eq:log-dulac-series} for two reasons:

1. The exponents $\alpha$ form a well-ordered subset of $\mathbb{R}_{>0}$,
but in general they do not belong to a finitely generated sub-semigroup
of $\mathbb{R}_{>0}$. Moreover, they might not form a strictly increasing
sequence tending to $+\infty$.

2. The blocks $R_{\alpha}$ are Laurent series in $\boldsymbol{\ell}_{1}$,
but they are not necessarily complex \emph{polynomials} in $\boldsymbol{\ell}_{1}^{-1}$ (additionally, in a Dulac series, the polynomial in the leading block must necessarily be a constant).\\

These are precisely the two properties that have to be checked to
guarantee that an element of $\mathcal{L}_{1}(\mathbb C)$ is a complex Dulac series,
as in the proof of the following lemma.
\begin{lem}[Formal linearization]
	\label{lem:formal dulac linearization} Let $\widehat{f}= \zeta+\beta+\mathrm{h.o.t.}$,
	$\beta\in\mathbb{C}^+$, be a hyperbolic complex Dulac series \eqref{eq:log-dulac-series}. Then there exists a unique parabolic complex exponential transseries $\widehat{\varphi}$ such that $\widehat{\varphi}\circ\widehat{f}=\widehat{\varphi}+\beta$. Moreover, $\widehat\varphi$ is a complex Dulac transseries as in  \eqref{eq:Dulac}. Finally, if the coefficients of $\widehat{f}$ are real, then so are those of $\widehat{\varphi}$.
\end{lem}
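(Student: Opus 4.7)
The plan is to first invoke the formal linearization theorem of \cite{prrs:normal_forms_hyperbolic_logarithmic_transseries} in the class $\mathcal L_1(\mathbb C)$ (transferring via $z=\mathrm{e}^{-\zeta}$; the hyperbolic hypothesis $\Re(\beta)>0$ corresponds to $0<|\lambda|<1$ with $\lambda=\mathrm{e}^{-\beta}$). Since the proof there is purely algebraic (recursion on well-ordered supports), replacing $\mathbb R$ by $\mathbb C$ as the coefficient field changes nothing, so we obtain a unique parabolic complex exponential transseries $\widehat\varphi$ solving $\widehat\varphi\circ\widehat f=\widehat\varphi+\beta$. The real content of the lemma is then to show that this $\widehat\varphi$ is a \emph{Dulac} transseries, i.e.\ that (a) its exponents lie in a finitely generated sub-semigroup of $\mathbb R_{>0}$ tending to $+\infty$, and (b) the coefficient blocks are polynomials in $\zeta$ (not Laurent series).

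To this end, I would write $\widehat f(\zeta)=\zeta+\beta+\varepsilon(\zeta)$ with $\varepsilon=\sum_{i\ge1}P_i(\zeta)\mathrm e^{-\alpha_i\zeta}$, $P_i\in\mathbb C[\zeta]$ and $\{\alpha_i\}$ contained in a finitely generated semigroup $G\subseteq(\mathbb R_{>0},+)$, and look for $\widehat\varphi$ in the form $\widehat\varphi(\zeta)=\zeta+\sum_{\mu>0}Q_\mu(\zeta)\mathrm e^{-\mu\zeta}$. Substituting into the linearization equation, Taylor-expanding $Q_\mu(\widehat f(\zeta))$ around $\zeta+\beta$, and developing $\mathrm e^{-\mu\widehat f(\zeta)}=\mathrm e^{-\mu\beta}\mathrm e^{-\mu\zeta}\sum_{k\ge0}\tfrac{(-\mu)^k}{k!}\varepsilon(\zeta)^k$, the comparison of coefficients of $\mathrm e^{-\mu\zeta}$, for each $\mu\in G$, produces the difference equation
\[
\mathrm e^{-\mu\beta}\,Q_\mu(\zeta+\beta)-Q_\mu(\zeta)=S_\mu(\zeta),
\]
where $S_\mu$ is a finite sum built from the $P_i$ with $\alpha_i\le\mu$ and from (derivatives of) the $Q_\nu$ with $\nu<\mu$.

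The key step is transfinite induction on $\mu\in G$ (well-ordered, and forming a strictly increasing sequence tending to $+\infty$ as $G$ is finitely generated inside $\mathbb R_{>0}$): assuming all $Q_\nu$ with $\nu<\mu$ are polynomials, $S_\mu$ is also polynomial (only finitely many products contribute to a fixed exponent $\mu$, because $\varepsilon$ is infinitesimal). Since $\Re(\mu\beta)=\mu\Re(\beta)>0$, the number $\mu\beta$ never lies in $2\pi\mathrm i\mathbb Z$, so $\mathrm e^{-\mu\beta}\ne1$; consequently the operator $T_\mu\colon Q\mapsto \mathrm e^{-\mu\beta}Q(\zeta+\beta)-Q(\zeta)$ is invertible on each $\mathbb C[\zeta]_{\le d}$ (its matrix in the monomial basis is upper triangular with diagonal $\mathrm e^{-\mu\beta}-1\ne0$). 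Hence $Q_\mu\in\mathbb C[\zeta]$ is uniquely determined with $\deg Q_\mu\le \deg S_\mu$, establishing both (a) and (b). The transseries so constructed must coincide with the $\widehat\varphi$ supplied by \cite{prrs:normal_forms_hyperbolic_logarithmic_transseries}, by the uniqueness there; alternatively, the difference $h$ of any two parabolic solutions satisfies $h\circ\widehat f=h$ and the same recursion forces $h\equiv 0$. The real-coefficient refinement is immediate: when $\beta\in\mathbb R$ and $P_i\in\mathbb R[\zeta]$, the operator $T_\mu$ preserves $\mathbb R[\zeta]$, hence so do all the $Q_\mu$.

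The main obstacle is not conceptual but a careful bookkeeping of the combinatorics of the Taylor expansion that defines $S_\mu$: one must verify that for each fixed $\mu$ only finitely many multi-indices in the expansions of $Q_\nu(\widehat f)\mathrm e^{-\nu\widehat f}$ produce the exponent $\mu$, and that no negative powers of $\zeta$ can be introduced when inverting $T_\mu$. Both rest on the infinitesimality of $\varepsilon$ and on the fact that $T_\mu$ preserves polynomial degree as soon as $\mathrm e^{-\mu\beta}\ne 1$, which is exactly where the hyperbolicity hypothesis $\Re(\beta)>0$ is used decisively.
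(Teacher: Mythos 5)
Your proposal is correct, and it takes a somewhat more direct route than the paper, though both rest on the same pivotal fact. The paper first passes to the $z$-chart, invokes the Main Theorem of \cite{prrs:normal_forms_hyperbolic_logarithmic_transseries} to get a linearization $\widehat{\varphi}_1\in\mathcal L_1(\mathbb C)$ as a Picard fixed point $\widehat\psi_n=(\mathcal T_{\widehat f_1}^{-1}\circ\mathcal S_{\widehat f_1})^{\circ n}(0)$, quotes \cite[Section 5]{prrs:normal_forms_hyperbolic_logarithmic_transseries} for the finite generation of the support, and then proves the polynomial property of the blocks by showing separately that $\mathcal S_{\widehat f_1}$ and $\mathcal T_{\widehat f_1}^{-1}$ preserve it — the latter by a contradiction argument extracting a nonzero purely-power-series part $a(\boldsymbol\ell_1)$ with leading coefficient multiplied by $1-\lambda^{\nu-1}\ne 0$. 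You instead work directly in the $\zeta$-chart, extract the block-by-block difference equation $\mathrm e^{-\mu\beta}Q_\mu(\zeta+\beta)-Q_\mu(\zeta)=S_\mu(\zeta)$, and run a transfinite induction on $\mu$ in the finitely generated semigroup, using that the triangular operator $T_\mu$ on $\mathbb C[\zeta]_{\le d}$ has diagonal $\mathrm e^{-\mu\beta}-1\ne 0$ because $|\mathrm e^{-\mu\beta}|=\mathrm e^{-\mu\Re(\beta)}<1$. Your $T_\mu$ is exactly the block-action of the paper's $\mathcal T_{\widehat f_1}$ read in the $\zeta$-chart (with $\lambda^{\alpha-1}=\mathrm e^{-\mu\beta}$ and $\boldsymbol\ell_1^{-1}(\lambda z)=\boldsymbol\ell_1^{-1}+\beta$), so the hyperbolicity hypothesis enters at the identical spot; what your organization buys is that the polynomial property and the finite generation of $\mathrm{Supp}(\widehat\varphi)$ both fall out of the recursion at once (if $\mu$ is not a finite $\mathbb N$-combination of the $\alpha_i$, then $S_\mu=0$ and hence $Q_\mu=0$), without needing the paper's separate contradiction argument or the external citation for the support. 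The uniqueness and real-coefficient parts are essentially the same argument in both versions — leading-term comparison for a putative nonzero difference, first at $\mu>0$ via $\mathrm e^{-\mu\beta}\ne1$, then at $\mu=0$ via the shift $\zeta\mapsto\zeta+\beta$ strictly raising the order in $\zeta^{-1}$ — and your appeal to the uniqueness in \cite{prrs:normal_forms_hyperbolic_logarithmic_transseries} (for the larger class of parabolic exponential transseries with Laurent blocks) is also an acceptable shortcut. One stylistic caution: your recursion, as written, only \emph{a priori} searches in the polynomial-block ansatz, so the statement's uniqueness among all parabolic complex exponential transseries does require either the PRRS21 appeal or the leading-term argument you mention in the alternative; it is worth making explicit that the difference-of-solutions argument must allow Laurent-series blocks.
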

\begin{proof}
	\emph{Existence.} Let $\lambda:=\exp(-\beta)\in\mathbb{C}$, where $\exp$ is the complex exponential function and not the compositional inverse of the logarithmic chart. Let
\begin{align*}
	\widehat{f}_1\left(z\right) & :=\exp\left(-\widehat{f}\left(-\log z\right)\right)\\
	& =\exp\left(-\left(-\log z+\beta+\sum_{i=1}^{\infty}\exp\left(\nu_{i}\log z\right)R_{i}\left(-\log z\right)\right)\right),\thinspace R_{i}\in\mathbb{C}\left[X\right], \, \nu _{i} >0 , \\
	& =\lambda z\exp\left(\sum_{i=1}^{\infty}z^{\nu_{i}}R_{i}\left(-\log z\right)\right)\\
	& =\lambda z+\sum_{i=1}^{\infty}z^{\alpha_{i}}P_{i}\left(\log z\right),\quad \alpha_i>1,\thinspace P_i\in\mathbb{C}[X],
\end{align*}
so that $\widehat{f}_1\in\mathcal{L}_{1}\left(\mathbb{C}\right)$ is
a complex logarithmic Dulac series in the variable $z$. Notice that here $\lambda$
is a complex number, and is not seen as the element of $\widetilde{\mathbb{C}}$ parameterized by $\beta$ in the logarithmic chart.
It is indeed important that all the coefficients of $\widehat{f}_1$
are complex numbers and not elements of $\widetilde{\mathbb{C}}$, in
order to apply to them all the algebraic computations involved in
the proof of the Main Theorem of \cite{prrs:normal_forms_hyperbolic_logarithmic_transseries}.
  	
	The latter implies that $\widehat{f}_1$ admits a unique parabolic linearization 
$\widehat{\varphi}_1\in\mathcal{L}_{1}(\mathbb C)$.
	Let $\widehat{g}_1:=\widehat{f}_1-\lambda\cdot\mathrm{id}$ and $\gamma:=\mathrm{ord}_{z}(\widehat{g}_1)$.
	As $\widehat{f}_1$ is a Dulac series, we have $\gamma>1$. Moreover,
	recall that the exponents of $z$ in $\widehat{f}_1$ form a finitely
	generated strictly positive sequence which tends to $+\infty$. Hence, we deduce from
	the description of the support of $\widehat{\varphi}_1$ given in 
	\cite[Section 5]{prrs:normal_forms_hyperbolic_logarithmic_transseries}
	that the exponents of $z$ in $\widehat{\varphi}_1$ also form a finitely
	generated strictly positive sequence which tends to $+\infty$. 
	
	We now prove the \emph{polynomial property} for blocks of the linearization $\widehat\varphi_1$:  that each monomial
	$z^{\alpha}$ in $\widehat{\varphi}_1$ is multiplied by a complex \emph{polynomial
		in $\boldsymbol{\ell}_{1}^{-1}=-\log z$}. By the proof of the Main Theorem
	in \cite{prrs:normal_forms_hyperbolic_logarithmic_transseries}, the
	linearization $\widehat{\varphi}_1$ is given by
	$$
	\widehat{\varphi}_1:=\mathrm{id}+\widehat{h}_1,
	$$
	where $\widehat{h}_1\in\mathcal{L}_{1}(\mathbb C)$, $\mathrm{ord}_{z}(\widehat{h}_1)>1$, 
	is the limit of the Picard sequence $\left(\widehat{\psi}_{n}\right)_{n\in\mathbb{N}}$
	defined by
	\begin{equation}
		\widehat{\psi}_{n}:=(\mathcal{T}_{\widehat{f}_1}^{-1}\circ\mathcal{S}_{\widehat{f}_1})^{\circ n}(0).\label{eq:pmt}
	\end{equation}
	Here, the limit is taken in the sense of the \emph{valuation topology} (see e.g. \cite{vdd_macintyre_marker:log-exp_series}): $\mathrm{ord}_{z}\left(\widehat{\psi}_{n}-\widehat{\varphi}_1\right)$
	tends to $+\infty$, as $n\to +\infty$. The operators $\mathcal{T}_{\widehat{f}_1}$
	and $\mathcal{S}_{\widehat{f}_1}$ (case $\beta>1$) are defined in \cite{prrs:normal_forms_hyperbolic_logarithmic_transseries}
	as
	\begin{equation}
		\mathcal{S}_{\widehat{f}_1}(\widehat{h})=\dfrac{1}{\lambda}\widehat{g}_1+\frac{1}{\lambda}\sum_{i\ge1}\frac{\widehat{h}^{\left(i\right)}\left(\lambda z\right)}{i!}\widehat{g}_1^{i}\text{,\ \ and\ \  }\mathcal{T}_{\widehat{f}_1}(\widehat{h})=\widehat{h}-\frac{1}{\lambda}\widehat{h}(\lambda z).\label{eq:definition Sf and Tf}
	\end{equation}
	In \eqref{eq:definition Sf and Tf}, transseries are applied to $\lambda z$. This means, using the following compositional rules, that
	$$
	\log(\lambda z)=\log(\lambda)+\log(z)=-\beta+\log(z)
	$$
	and, for $\alpha>0$,
	$$
	(\lambda z)^\alpha=\lambda^\alpha z^\alpha=\exp(\alpha\log(\lambda))z^\alpha=\exp(-\alpha\beta)z^\alpha.
	$$ 
In particular, we see that, in this proof, $\lambda$ is the only complex number for which we have to impose a determination of the logarithm. We chose $\log(\lambda)=-\beta$ in view of the final step of the proof, in which we deduce the linearization of $\widehat{f}$ from the linearization of $\widehat{f}_1$.

Now, due to the convergence of \eqref{eq:pmt} to the linearization $\widehat\varphi_1$ in the valuation topology, it suffices to prove the following: if $\widehat{h}\in\mathcal{L}_{1}(\mathbb C)$ with
	$\mathrm{ord}_{z}(\widehat{h})>1$ satisfies the polynomial property,
	the same holds for $\mathcal{T}_{\widehat{f}_1}^{-1}(\widehat{h})$
	and for $\mathcal{S}_{\widehat{f}_1}(\widehat{h})$. 
	
	Notice that the
	polynomial property is preserved under differentiation, under multiplication
	by a complex Dulac series, as well as under precomposition with $\lambda z$. Therefore, 
	if $\widehat h$ has the polynomial property, then so does $\mathcal{S}_{\widehat{f}_1}(\widehat{h})$, thanks to \eqref{eq:definition Sf and Tf}, the previous remark and  the
	fact that, as $\gamma>1$, $\mathcal{S}_{\widehat{f}_1}$ is an infinite
	sum of operators which strictly increase $\mathrm{ord}_{z}$. 
	
	Let us now check the polynomial property for $\mathcal{T}_{\widehat{f}_1}^{-1}(\widehat h)$. Suppose the contrary, that is, that there exists $\widehat{h}\in\mathcal{L}_{1}(\mathbb C)$,
	$\mathrm{ord}_{z}(\widehat{h})>1$, which satisfies the polynomial
	property, while $\mathcal{T}_{\widehat{f}_1}^{-1}(\widehat{h})$ does
	not. Then $\mathcal{T}_{\widehat{f}_1}^{-1}(\widehat{h})$ admits a block $\widehat{R}_{\nu}(\boldsymbol{\ell}_{1})\in\mathbb{C}\left(\left(\boldsymbol{\ell}_{1}\right)\right)$
	, for some $\nu>1$, which is not a polynomial in $\boldsymbol{\ell}_{1}^{-1}$.
	Hence we can write 
	\[
	\widehat{R}_{\nu}\left(\boldsymbol{\ell}_{1}\right)=Q\left(\boldsymbol{\ell}_{1}^{-1}\right)+a\left(\boldsymbol{\ell}_{1}\right),\text{ with }a\left(\boldsymbol{\ell}_{1}\right)=\sum_{n\ge n_{0}}a_{n}\boldsymbol{\ell}_{1}^{n},\ a_n\in\mathbb C,
	\]
	where $Q\in\mathbb{C}\left[\boldsymbol{\ell}_{1}^{-1}\right]$ is a polynomial and $a\in\mathbb C\left[\left[\boldsymbol{\ell}_{1}\right]\right]$
	is a nonzero power series such that  $a_{n_{0}}\neq 0$, $n_{0}\in \mathbb{N}_{\geq 1}$. Now apply
	$\mathcal{T}_{\widehat{f}_1}$ from \eqref{eq:definition Sf and Tf} to such $\mathcal{T}_{\widehat{f}_1}^{-1}(\widehat{h})$, to obtain $\widehat h$. However,
	using \eqref{eq:definition Sf and Tf} and the evident relations (see \cite[Section 3.6]{prrs:normal_forms_hyperbolic_logarithmic_transseries}) 
	\begin{align*}
	&\boldsymbol{\ell}_{1}^{-1}\left(\lambda z\right)=\boldsymbol{\ell}_{1}^{-1}-\log\lambda=\boldsymbol{\ell}_{1}^{-1}+\beta,\\
	&\boldsymbol{\ell}_{1}\left(\lambda z\right)=\boldsymbol{\ell}_{1}\cdot\left(1+\varepsilon\left(\boldsymbol{\ell}_{1}\right)\right),\ \text{for some }\varepsilon\in\mathbb{C}\left[\left[\boldsymbol\ell_1\right]\right],\ \varepsilon\left(0\right)=0,\\
	&\boldsymbol{\ell}_{1}^{n}\left(\lambda z\right)=\boldsymbol{\ell}_{1}^{n}\cdot\left(1+\varepsilon_{n}\left(\boldsymbol{\ell}_{1}\right)\right),\ \text{for some }\varepsilon_{n}\in\mathbb{C}\left[\left[\boldsymbol\ell_1\right]\right],\  \varepsilon_{n}\left(0\right)=0\ (n\in \mathbb{N}_{\geq 1}),
	\end{align*}
it is easy to see that the block of index $\nu$ in $\widehat{h}=\mathcal{T}_{\widehat{f}_1}\left(\mathcal{T}_{\widehat{f}_1}^{-1}(\widehat{h})\right)$ is
	\begin{equation}
		Q\left(\boldsymbol{\ell}_{1}^{-1}\right)+a\left(\boldsymbol{\ell}_{1}\right)-\lambda^{\nu-1}\left(Q\left(\boldsymbol{\ell}_{1}^{-1}+\beta\right)+\widetilde{a}\left(\boldsymbol{\ell}_{1}\right)\right),\label{eq:tje}
	\end{equation}
	where $\widetilde{a}\in\mathbb{C}\left[\left[\boldsymbol\ell_1\right]\right]$ is a power series such that $\widetilde{a}\left(0\right) =0$ and with $a_{n_0}\boldsymbol{\ell}_1^{n_0}$ as leading term.
	The power series $a-\lambda^{\nu-1}\widetilde{a}\in\mathbb{C}[[\boldsymbol{\ell}_1]]$ does not have a constant term, but is nonzero because
	its smallest coefficient is equal to $\left(1-\lambda^{\nu-1}\right)a_{n_{0}}\ne0$.
	This contradicts the fact that the block of index $\nu$ of
	$\widehat{h}$ is a polynomial in $\boldsymbol{\ell}_{1}^{-1}$. Therefore, $\widehat{\varphi }_{1}$ is a logarithmic Dulac series. \\

Finally, let $\widehat{\varphi}\left(\zeta\right):=-\log\left(\widehat{\varphi}_{1}\left(\exp\left(-\zeta\right)\right)\right)$.
Since we chose $\log(\lambda)=-\beta$, we have that  $\widehat{f}(\zeta)=-\log\left(\widehat{f}_{1}\left(\exp(-\zeta)\right)\right)$
and $-\log\left(\lambda\widehat{\varphi}_{1}\left(\exp\left(-\zeta\right)\right)\right)=\widehat{\varphi}\left(\zeta\right)+\beta$. Hence we deduce from $\widehat{\varphi}_{1}\circ\widehat{f}_{1}=\lambda\widehat{\varphi}_{1}$
that $\widehat{\varphi}\circ\widehat{f}=\widehat{\varphi}+\beta$.

Notice that in this proof, if the coefficients of $\widehat f$ are real, so are the coefficients of $\widehat{f}_1$, $\widehat{\varphi}_1$, and $\widehat{\varphi}$.\\

\emph{Uniqueness.} We have $\widehat{f}(\zeta)=\zeta+\beta+\widehat\varepsilon(\zeta)$, where $\widehat\varepsilon(\zeta)$ is an infinitesimal complex exponential transeries with the polynomial property. The difference $\widehat{\psi}$ between two parabolic complex linearizations of $\widehat{f}$ of type \eqref{eq:parabolic-exponential} satisfies
\begin{equation}
\widehat{\psi}\circ\widehat{f}=\widehat{\psi}.
\label{eq:uniqueness-equation}
\end{equation}
If $\widehat{\psi}\ne0$, then its leading term is $a\zeta^p\exp(-\nu\zeta)$ for some $a\in\mathbb{C}\setminus\{0\}$, $\nu\geq0$ and $p\in\mathbb{Z}$. There are two cases.

If $\nu>0$, then the leading term of $\widehat{\psi}\circ\widehat{f}$ is $a\zeta^p\exp(-\nu\zeta)\exp(-\nu\beta)$ (because $\exp(\varepsilon(\zeta))=1+\eta(\zeta)$, where $\eta$ is an infinitesimal transseries). It follows from \eqref{eq:uniqueness-equation} that $\exp(-\nu\beta)=1$, which is impossible, as $\Re(\beta)>0$ and $\nu>0$.

If $\nu=0$, then $p<0$. In that case, $\widehat{\psi}(\zeta)=a\zeta^p+b\zeta^{p-1}+\mathrm{h.o.t.}$, with $b\in\mathbb{C}$ possibly equal to $0$. Comparing the coefficients of $\zeta^{p-1}$ on both sides of \eqref{eq:uniqueness-equation}, we see that $pa\beta$ must be $0$, a contradiction.
\end{proof}



\subsection{Partial linearizations and homological equations}\label{SubSectionCohomol}
Consider a nontrivial hyperbolic complex Dulac map $f$ on a standard quadratic domain $\mathcal{R}_{C}\subset\mathbb{C}^+$
(\emph{nontrivial} meaning that $f\left(\zeta\right)\neq\zeta+\beta$, $\beta\in\mathbb{C}^+$). Then
$f$ admits a nontrivial (by quasianalyticity) complex Dulac expansion $\widehat{f}=\zeta+\beta+\exp(-\alpha_1\zeta)P_1(\zeta)+\mathrm{h.o.t.}$, where $\alpha_1 >0$ and $P_1\in \mathbb{C}\left[\zeta\right] $.
Hence $f$ satisfies the hypotheses of Theorem A in Section \ref{sec:analytic linearization dewsnap}. Therefore, by Theorem~A, $f$ admits an analytic linearization on the invariant domain $({\mathcal{R}_{C}})^f_{R}$, for $R>0$ sufficiently large. 

Note that here $({\mathcal{R}_{C}})^f_{R}=({\mathcal{R}_{C}})_{R}$, for $R>0$ sufficiently large. Indeed, by Example (3), it follows that if $\mathcal{R}_C$ is a standard quadratic domain, then for a sufficiently large $R>0$, $(\mathcal{R}_{C})_{R}$ is a domain of type $(\beta , \varepsilon , k)$, $\varepsilon >0$ and $k\in \mathbb{N}_{\geq 1}$. Proposition \ref{prop:invariance of D} implies that $(\mathcal{R}_{C})_{R}$, for a sufficiently large $R>0$, is $f$-invariant. Therefore, it is easy to see that $({\mathcal{R}_{C}})_{R}^f=(\mathcal{R}_{C})_{R}$, for a sufficiently large $R>0$. Now let $D:=(\mathcal{R}_{C})_{R}$. By Definition~\ref{def:gsqd}, we call such a domain $D$ a \emph{germ} of the standard quadratic domain $\mathcal R_C$, since, as a germ, it is equal to $\mathcal R_C$.\\

By Theorem A, there exists an analytic linearization $\varphi$ of $f$ on $D$, which is unique in the class of tangent to the identity linearizations on $f$-invariant subdomains. On the other hand, on the formal side, we apply the formal linearization Lemma \ref{lem:formal dulac linearization} to $\widehat{f}$, to obtain the unique formal parabolic Dulac linearization $\widehat \varphi$. We say that $f$ is \emph{formally linearizable} (by a parabolic complex Dulac series). \\

In the sequel, we prove the lemmas which will be used in the proof of Theorem~B in Subsection~\ref{sec:proofB}, to show that the formal linearization $\widehat{\varphi}$ is the Dulac asymptotic expansion of the analytic linearization $\varphi$ on some standard quadratic subdomain of $D$ 

%
The following Lemma \ref{lem:partial linearizations} shows how
the partial sums of the formal linearization of a hyperbolic complex Dulac
map perform its \emph{approximate} analytic linearizations on its domain. Such partial linearizations satisfy the homological equations \eqref{eq:coho}.
\begin{lem}[Partial linearizations]
	\label{lem:partial linearizations} Let $f(\zeta)=\zeta+\beta +o(1)$, $\beta\in\mathbb{C}^+$, be a nontrivial hyperbolic complex Dulac map
 defined on a standard quadratic domain $\mathcal R_C$. Let the parabolic Dulac series
	\begin{equation}
		\widehat{\varphi}\left(\zeta \right)=\zeta +\sum_{i=1}^{\infty}\mathrm{e}^{-\beta_{i}\zeta }Q_{i}\left(\zeta \right)\label{eq:serij},
	\end{equation}
	where $Q_{i}\in\mathbb{C}\left[\zeta\right]$ and  $(\beta_i)_i$ is a strictly increasing sequence of positive real numbers tending to $+\infty$, be its formal linearization from Lemma~\ref{lem:formal dulac linearization}. Here, if $\widehat\varphi$ is a finite sum, that is, if there exists $i_0\in\mathbb N$ such that $Q_i=0$ for $i> i_0$, we take any strictly increasing sequence $(\beta_i)_{i>i_0}$ such that $\beta_i>\beta_{i_0}$ and $\beta_i\to+\infty$.
	Let
	\begin{align}
		\widehat\varphi_{0} &:=\zeta, \nonumber \\
		\widehat\varphi_{n} &:=\zeta +\sum_{i=1}^{n}\mathrm{e}^{-\beta_{i}\zeta }Q_{i}(\zeta ),\ n\in\mathbb{N}_{\geq 1}, \label{eq:parsums}
	\end{align}
be	the \emph{partial sums} of $\widehat{\varphi}$, and $\varphi_n$ be the analytic germs on $\mathbb C^+$ defined by the finite sums $\widehat{\varphi}_n$, $n\in\mathbb N$. Then, for every $n\in\mathbb{N}$,
	there exists $\nu_{n}>0$, such that 
	\begin{equation}\label{eq:coho}
	(\varphi_{n}\circ f)(\zeta )-\varphi_{n}(\zeta ) =\beta +o\left(\mathrm{e}^{-(\beta_{n}+\nu_{n})\zeta }\right),
	\end{equation}
	uniformly on $\mathcal R_C$ as $\Re (\zeta ) \to +\infty $. Here, $\beta_{0}=0$. 
\end{lem}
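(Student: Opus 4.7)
The strategy is to reduce the analytic identity \eqref{eq:coho} to a formal calculation in the transseries algebra and then transfer back to an analytic bound on $\mathcal{R}_C$ using the uniform Dulac asymptotics \eqref{eq:ekspa} of $f$.

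On the formal side, Lemma~\ref{lem:formal dulac linearization} provides $\widehat{\varphi}\circ\widehat{f}=\widehat{\varphi}+\beta$. Writing $\widehat{\varphi}=\widehat{\varphi}_n+\widehat{R}_n$ with tail $\widehat{R}_n=\sum_{i>n}e^{-\beta_i\zeta}Q_i(\zeta)$ and subtracting, I obtain the algebraic identity
\begin{equation}\label{eq:plan_alg}
\widehat{\varphi}_n\circ\widehat{f}-\widehat{\varphi}_n-\beta=\widehat{R}_n-\widehat{R}_n\circ\widehat{f}.
\end{equation}
Since $\widehat{f}(\zeta)=\zeta+\beta+\mathrm{h.o.t.}$, composing a monomial $e^{-\beta_i\zeta}Q_i(\zeta)$ with $\widehat{f}$ only multiplies its leading exponential factor by $e^{-\beta_i\beta}$ up to higher order. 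Therefore both sides of \eqref{eq:plan_alg} have formal order at least $\beta_{n+1}$, which is strictly larger than $\beta_n$.

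To promote this to an analytic estimate, I fix any $\nu_n\in(0,\beta_{n+1}-\beta_n)$ and any $\mu>\beta_{n+1}$. By \eqref{eq:ekspa} there exists $N\in\mathbb{N}$ such that the truncation
$$
f^{(N)}(\zeta):=\zeta+\beta+\sum_{i=1}^{N}P_i(\zeta)e^{-\alpha_i\zeta}
$$
satisfies $f-f^{(N)}=o(e^{-\mu\zeta})$ uniformly on $\mathcal{R}_C$. I then split
$$
\varphi_n\circ f-\varphi_n-\beta=\bigl(\varphi_n\circ f-\varphi_n\circ f^{(N)}\bigr)+\bigl(\varphi_n\circ f^{(N)}-\varphi_n-\beta\bigr).
$$
The first bracket is controlled by writing it as $\int_0^1\varphi_n'\bigl(f^{(N)}+t(f-f^{(N)})\bigr)\,dt\cdot(f-f^{(N)})$; since $\varphi_n'$ grows at most polynomially on $\mathcal{R}_C$ and $|f-f^{(N)}|=o(e^{-\mu\zeta})$, this bracket is $o(e^{-\mu'\zeta})$ for any $\mu'<\mu$. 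For the second bracket, I expand each factor $e^{-\beta_i f^{(N)}(\zeta)}=e^{-\beta_i\zeta}e^{-\beta_i\beta}\exp\!\bigl(-\beta_i(f^{(N)}(\zeta)-\zeta-\beta)\bigr)$ in its absolutely convergent Taylor series: this identifies $\varphi_n\circ f^{(N)}$ with its formal Dulac expansion $\widehat{\varphi}_n\circ\widehat{f}^{(N)}$ up to an error of order $>\mu$. Combined with \eqref{eq:plan_alg} and the fact that $\widehat{f}-\widehat{f}^{(N)}$ has order $>\mu$, the second bracket is of order $O(|\zeta|^Me^{-\beta_{n+1}\zeta})$, hence $o(e^{-(\beta_n+\nu_n)\zeta})$ by the choice of $\nu_n$.

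The most delicate point is the second bracket, where one must justify that the finite exponential-polynomial composition $\varphi_n\circ f^{(N)}$ admits a genuine uniform asymptotic expansion on $\mathcal{R}_C$ whose leading behavior is read off from \eqref{eq:plan_alg}. The uniformity of the Dulac estimate \eqref{eq:ekspa}, together with the polynomial control on the coefficients $P_i$, $Q_i$ on the spiraling domain $\mathcal{R}_C$, is precisely what makes the formal-to-analytic passage legitimate. Once these two brackets are added, \eqref{eq:coho} follows.
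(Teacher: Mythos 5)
Your argument is correct, and the two pillars it rests on are the same as in the paper: the formal linearization identity from Lemma~\ref{lem:formal dulac linearization}, and the fact that finite sums of power-exponential monomials have uniform asymptotics on a quadratic domain because $|\Im(\zeta)|$ is polynomially controlled by $\Re(\zeta)$ there. What differs is the bookkeeping. You isolate the tail $\widehat{R}_n=\widehat{\varphi}-\widehat{\varphi}_n$ and derive the compact identity $\widehat{\varphi}_n\circ\widehat{f}-\widehat{\varphi}_n-\beta=\widehat{R}_n-\widehat{R}_n\circ\widehat{f}$, read off its order $\geq\beta_{n+1}$, and then bridge to analysis by truncating $f$ at a high level $\mu>\beta_{n+1}$ and splitting $\varphi_n\circ f-\varphi_n-\beta$ into a mean-value term plus a finite exp-polynomial composition. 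The paper instead truncates $\widehat{f}$ itself at the level $\beta_n$ to obtain $\widehat{f}_n$, Taylor-expands $\widehat{\varphi}\circ\widehat{f}$ about $\widehat{f}_n$, and extracts the order estimate for the finite quantity $\widehat{\varphi}_n\circ\widehat{f}_n-\widehat{\varphi}_n-\beta$; that choice has the virtue that the object to be estimated is already a finite composition of finite sums, so the formal order passes directly to a uniform analytic estimate on $\mathcal{R}_C$, after which one short Taylor step (from $f_n$ to $f$) closes the argument. Your formal step is cleaner, but you then have to do a bit more work on the analytic side to reconcile the analytic $\varphi_n\circ f$ with the formal series, since $\widehat{R}_n\circ\widehat{f}$ is only a formal object; the paper's truncation choice front-loads the cost so that the same truncation serves both the formal and the analytic halves. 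The net effort is comparable and your proof is sound.
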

Note that, if $\widehat\varphi$ is a finite sum, the sequence $(\varphi_n)_{n\in\mathbb{N}}$ eventually stabilizes.  

\begin{proof}
	Let 
	\[
	\widehat{f}\left(\zeta \right)=\zeta +\beta +\sum_{i=1}^{\infty}\mathrm{e}^{-\alpha_i \zeta }P_{i}\left(\zeta \right),\thinspace P_{i}\in\mathbb{C}\left[\zeta\right],\ i\in\mathbb N_{\geq 1}, \; 
	\]
	where $(\alpha_i)_i$ is a strictly increasing sequence of strictly positive real numbers tending to $+\infty$, be the (complex) Dulac expansion of $f$. Recall that this expansion of $f$ is uniform on a standard quadratic domain $\mathcal{R}_{C}$, as $\Re(\zeta) \to + \infty $. For $n\in\mathbb{N}$, let 
	\begin{align}
		\widehat{f}_{0} & := \zeta +\beta , \nonumber \\
		\widehat{f}_{n} &:=\zeta +\beta +\sum_{i\in\mathbb N_{\geq 1}:\,\alpha_i\leq\beta_{n}}\mathrm{e}^{-\alpha_i \zeta }P_{i}\left(\zeta \right), \quad n\in \mathbb{N}_{\geq 1}, \nonumber
	\end{align}
	be the \emph{partial sums} of $\widehat f$. 
	Furthermore, let $\widehat{g}_{n}:=\widehat{f}-\widehat{f}_{n}$, for $n\in \mathbb{N}$. The composition $\widehat{\varphi}\circ\widehat{f}$ can be computed as
	\begin{equation}
		\widehat{\varphi}\circ\widehat{f}=\widehat{\varphi}\left(\widehat{f}_{n}+\widehat{g}_{n}\right)=\widehat{\varphi}\circ\widehat{f}_{n}+\sum_{i\geq1}\frac{\widehat{\varphi}^{(i)}\circ\widehat{f}_{n}}{i!}\widehat{g}_{n}^{i} \label{Eq1},
	\end{equation}
	as the series in \eqref{Eq1} converges for the valuation topology.
Obviously, 
	\begin{equation}
		\widehat{\varphi}\circ\widehat{f}_{n}=\widehat{\varphi}_{n}\circ\widehat{f}_{n}+\left(\widehat{\varphi}-\widehat{\varphi}_{n}\right)\circ\widehat{f}_{n} , \quad n\in \mathbb{N}. \label{Eq2}
	\end{equation}
	Now, using \eqref{Eq1} and \eqref{Eq2} and the fact that $\widehat{\varphi }$ is the formal linearization of $\widehat{f}$,  we get: 
	\begin{align}
		0&=\widehat{\varphi}\circ\widehat{f}-\widehat{\varphi}-\beta\nonumber\\
		 &=\widehat{\varphi}_{n}\circ\widehat{f}_{n}-\widehat{\varphi}_{n}+\left(\widehat{\varphi}-\widehat{\varphi}_{n}\right)\circ\widehat{f}_{n}-\left(\widehat{\varphi}-\widehat{\varphi}_{n}\right)+\sum_{i\geq1}\frac{\widehat{\varphi}^{(i)}\circ\widehat{f}_{n}}{i!}\widehat{g}_{n}^{i} - \beta , \label{Eq3}
	\end{align}
	for $n\in \mathbb{N}$. For every $n\in\mathbb N$, it can easily be seen that there exists $\mu_n>0$, such that:
	\begin{align}
		\mathrm{ord}_{\mathrm{e}^{-\zeta}}\left(\left(\widehat{\varphi}-\widehat{\varphi}_{n}\right)\circ\widehat{f}_{n}-\left(\widehat{\varphi}-\widehat{\varphi}_{n}\right)+\sum_{i\geq1}\frac{\widehat{\varphi}^{(i)}\circ\widehat{f}_{n}}{i!}\widehat{g}_{n}^{i}\right) & >\beta _{n}+\mu _{n}.\label{Eq4}
	\end{align}
	From \eqref{Eq3} and \eqref{Eq4}, we obtain that
	\begin{equation}
		\mathrm{ord}_{\mathrm{e}^{-\zeta}}\left(\widehat{\varphi}_{n}\circ\widehat{f}_{n}-\widehat{\varphi}_{n}-\beta\right) >\beta_{n}+\mu _{n}, \quad n\in \mathbb{N}. \label{Eq5}
	\end{equation}
	As the sums in $\widehat{\varphi}_{n}$ and $\widehat{f}_{n}$ are
	finite, they define analytic germs $\varphi_{n}$ and $f_{n}$ on $\mathbb C^+$, in the $\zeta $-chart. This implies that 
	\begin{equation}
		\varphi_n\circ f_{n}-\varphi_{n}-\beta =o\left(\mathrm{e}^{-(\beta_{n}+\mu_{n})\zeta }\right), \ \Re(\zeta)\to+\infty \text{ on }\mathbb C^+, \label{Eq19}
	\end{equation}
	for $n\in \mathbb{N}$. Moreover, due to the fact that $f_n$ and $\varphi_n$, $n\in\mathbb N$, are \emph{finite} sums of power-exponential monomials, the convergence is \emph{uniform} if we restrict to the standard quadratic domain $\mathcal R_C$ (since the imaginary part is bounded by a power of the real part along this domain).
	
	Now put $ g_{n}(\zeta ):= f(\zeta )- f_{n}(\zeta )$, for $\zeta \in \mathcal{R}_{C}$ and $n\in \mathbb{N}$. It is obvious that $ g_{n}\sim\widehat{g}_{n}$ uniformly on $\mathcal{R}_{C}$ as $\Re (\zeta ) \to + \infty $. 
	By Taylor's Theorem, \eqref{Eq19}, and since $\varphi_n$ is a finite sum of monomials with uniform asymptotics on $\mathcal R_C$, it follows that  for every $n\in \mathbb{N}$ there exists some $\nu_n>0$ such that
	\begin{align}
		 \varphi _{n}\circ f-\varphi _{n}-\beta & =  \varphi _{n}( f_{n}+ g_{n})-\varphi _{n}-\beta\nonumber \\
		& = \varphi_{n}\circ  f_{n}-\varphi_{n}-\beta+ \sum _{i=1}^{+\infty }\frac{ \varphi_{n}^{(i)}( f_{n})}{i!} g_{n}^{i} \nonumber \\
		& = o(\mathrm{e}^{-(\beta_{n}+\nu _{n})\zeta }), \nonumber 
	\end{align}
	uniformly on the standard quadratic domain $\mathcal{R}_{C}$, as $\Re (\zeta )\to +\infty$.
\end{proof}

\medskip

Lemma~\ref{prop:homological equation} (below) shows how to solve and give an estimate of the solution
of a particular homological equation, that resembles the Abel's equation. The idea of the proof is taken from \cite{Loray_series_divergentes}. This will be used in the proof
of Theorem B (in Subsection~\ref{sec:proofB}) to control the growth of the differences between
the analytic linearization of a hyperbolic complex Dulac map, given by Theorem
A, and its partial linearizations by truncated complex Dulac sums given by \eqref{eq:parsums} in 
Lemma \ref{lem:partial linearizations}. These differences themselves
solve particular homological equations. 

This estimates allow us to conclude, in the proof of Theorem~B in Subsection~\ref{sec:proofB}, that a hyperbolic complex Dulac germ admits a parabolic complex Dulac linearization on some invariant standard quadratic (sub)domain of its domain of definition.

\begin{lem}[Explicit analytic solutions to Abel-type homological equations]
	\label{prop:homological equation}Let $f$ be a hyperbolic complex Dulac
	germ defined on a standard quadratic domain $\mathcal{R}_{C}\subset\mathbb{C}^+$. Let
	$h$ be an analytic map on $\mathcal{R}_{C}$, such that $h(\zeta )=o\left( \mathrm{e}^{-\alpha \zeta }\right)$ for some $\alpha >0$, uniformly on $\mathcal{R}_{C}$ as $\Re (\zeta ) \to +\infty $.
	Then:
	\begin{itemize}
		\item[$1.$] \emph{(Existence of an analytic solution to a homological equation)} There exist $R>0$ such that $D:=(\mathcal{R}_{C})_{R}$ is an $f$-invariant subdomain
		$D\subseteq\mathcal{R}_{C}$, and an analytic
		solution $\psi$ of the homological equation 
		\begin{equation}
			(\psi\circ  f)(\zeta )-\psi(\zeta )= h(\zeta ) \label{eq:homo}
		\end{equation}
		on the subdomain $D$.
		\item[$2.$] \emph{(Estimate of the solution)} The following estimate holds:
		\begin{equation}
			\psi(\zeta )=O(\mathrm{e}^{-\alpha\zeta}), \label{eq:asimptot}
		\end{equation}
		uniformly on $D$ as $\Re (\zeta )\to + \infty $.
		\item[$3.$] \emph{(Uniqueness of the solution)} If $\psi_{1}$ is an analytic solution of equation \eqref{eq:homo} on an $f$-invariant subdomain $D_{1}\subseteq \mathcal{R}_{C}$, such that $\psi _{1}(\zeta )=o(1)$ uniformly on $D_{1}$ as $\Re (\zeta ) \to + \infty $, then $$\psi _{1}\equiv \psi \text{ on } (D_{1})_R=D\cap D_1.$$
	\end{itemize} 
\end{lem}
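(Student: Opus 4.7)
The plan is to construct $\psi$ explicitly as a telescoping sum over the forward orbit of $f$, following the technique indicated in \cite{Loray_series_divergentes}. Concretely, I would set
\begin{equation*}
\psi(\zeta) := -\sum_{n=0}^{+\infty} h\bigl(f^{\circ n}(\zeta)\bigr),
\end{equation*}
on the $f$-invariant domain $D=(\mathcal R_C)_R$ supplied, for $R$ sufficiently large, by Example~(3) of Subsection~\ref{subsec:ex} together with Proposition~\ref{prop:invariance of D}. Granted convergence of the series, the identity $\psi\circ f - \psi = h$ is immediate from telescoping, so all the substance lies in proving uniform convergence, analyticity, the asymptotic bound, and uniqueness.

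For the convergence and the estimate, I would feed the lower bound \eqref{EqRealPart} from Proposition~\ref{prop:invariance of D} into the hypothesis on $h$. That bound gives $\Re(f^{\circ n}(\zeta)) \geq \Re(\zeta) + n\rho_{\beta,\varepsilon,k}^{-}(R)$ for every $\zeta \in D$ and $n\in\mathbb N$. The hypothesis $h(\zeta)=o(\mathrm{e}^{-\alpha\zeta})$ uniformly on $\mathcal R_C$ translates, on a standard quadratic domain, to $|h(\zeta)|\leq K\mathrm{e}^{-\alpha\Re(\zeta)}$ for some $K>0$ (the imaginary part being dominated by a power of the real part along $\mathcal R_C$). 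Combining these yields the geometric bound
\begin{equation*}
|h(f^{\circ n}(\zeta))| \leq K\mathrm{e}^{-\alpha\Re(\zeta)}\mathrm{e}^{-\alpha n\rho_{\beta,\varepsilon,k}^{-}(R)},
\end{equation*}
which gives both uniform convergence on $D$, whence analyticity of $\psi$ by Weierstrass' theorem, and, upon summing the geometric series, the required estimate $\psi(\zeta) = O(\mathrm{e}^{-\alpha\zeta})$ uniformly on $D$.

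For uniqueness I would mimic part (iv) of the proof of Theorem A. If $\psi_1$ is another analytic solution on an $f$-invariant $D_1\subseteq\mathcal R_C$ with $\psi_1(\zeta)=o(1)$ uniformly, then $E:=\psi_1-\psi$ is analytic on $(D_1)_R = D\cap D_1$, satisfies $E\circ f = E$, and is itself $o(1)$ uniformly as $\Re(\zeta)\to+\infty$ (using the bound already obtained for $\psi$). Iteration gives $E(\zeta)=E(f^{\circ n}(\zeta))$ for every $n$, and since $\Re(f^{\circ n}(\zeta))\to+\infty$ by the orbital estimate, passing to the limit forces $E\equiv 0$ on $(D_1)_R$.

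The main obstacle I anticipate is not the formal telescoping, which is routine, but ensuring that the decay of $h$ is genuinely \emph{uniform} enough along every orbit to dominate the series, and that the resulting $\psi$ inherits a uniform (not merely locally uniform) $O(\mathrm{e}^{-\alpha\zeta})$ bound on the full subdomain $D$. The resolution will rest on two uniform ingredients: the translation of $h(\zeta)=o(\mathrm{e}^{-\alpha\zeta})$ into an exponential bound in $\Re(\zeta)$, valid across the whole quadratic domain, and the uniform positivity $\rho_{\beta,\varepsilon,k}^{-}(R)>0$ for the orbital drift of $f$ provided by Proposition~\ref{prop:invariance of D}.
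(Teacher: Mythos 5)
Your proposal is correct and follows essentially the same path as the paper: the same telescoping series $\psi = -\sum_n h\circ f^{\circ n}$, the same geometric-decay estimate combining $\Re(f^{\circ n}(\zeta))\geq \Re(\zeta)+n\rho_{\beta,\varepsilon,k}^-(R)$ with the hypothesis on $h$, Weierstrass for analyticity, and the same fixed-point iteration argument for uniqueness. The only small inaccuracy is the parenthetical remark invoking the quadratic shape of $\mathcal R_C$ to pass from $o(\mathrm{e}^{-\alpha\zeta})$ to a bound in $\mathrm{e}^{-\alpha\Re(\zeta)}$: since $|\mathrm{e}^{-\alpha\zeta}|=\mathrm{e}^{-\alpha\Re(\zeta)}$ identically, the uniform hypothesis on $h$ already gives this directly, with no appeal to the domain's geometry.
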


\begin{proof}\

	\emph{1. Existence of a solution}. We prove that the following series: 
	\begin{equation}
		\psi(\zeta ):=-\sum_{n=0}^{+\infty} h\left( f^{\circ n}(\zeta )\right)\label{eq:psii}
	\end{equation}
	converges uniformly on $D$ (in the $\zeta $-chart) to an analytic map
	$\psi$ which satisfies equation \eqref{eq:homo}.
	
	Since $ h(\zeta )=o\left(\mathrm{e}^{-\alpha \zeta }\right)$ uniformly on $\mathcal{R}_{C}$
	as $\Re(\zeta )\to +\infty$, there exists $R>0$ such that
	\begin{equation}
		| h(\zeta )|\leq |\mathrm{e}^{-\alpha \zeta }| = \frac{1}{\mathrm{e}^{\alpha \Re (\zeta )}}\leq \frac{1}{\mathrm{e}^{\alpha R}}, \label{eq:prvis}
	\end{equation}
	for $\zeta \in \mathcal{R}_{C}$, $\Re (\zeta )\geq R$. 
	
	Let $\varepsilon>0$ and $k\in\mathbb N$ be arbitrary. By the discussion at the beginning of Subsection \ref{SubSectionCohomol}, we take $R>0$ sufficiently large such that $(\mathcal{R}_{C})_{R}=(\mathcal{R}_{C})_{R}^f$, that is, such that the whole of $(\mathcal{R}_{C})_R$ is $f$-invariant. Now, put $D:=(\mathcal{R}_{C})_{R}$. 
	From \eqref{Equat1} it follows that
	\begin{align}
		\Re ( f^{\circ n}(\zeta )) & \geq \Re(\zeta)+n\rho^-_{\beta , \varepsilon ,k}(R)\geq R+n\rho^-_{\beta , \varepsilon ,k}(R), \ \mathrm{for}\ \zeta\in D,\label{eqBefore}
	\end{align}
	for $n\in \mathbb{N}$. Now, from \eqref{eq:prvis} and \eqref{eqBefore}, it follows that, for $n\in\mathbb N$ and $\zeta\in D$
	\begin{align}
		\left|  h( f^{\circ n}(\zeta ))\right| &  \leq \frac{1}{\mathrm{e}^{\alpha \Re (f^{\circ n}(\zeta ))}} \leq \frac{1}{\mathrm{e}^{\alpha R}}\cdot \left( \frac{1}{\mathrm{e}^{\alpha \cdot\rho^-_{\beta , \varepsilon ,k}(R)}}\right)^{n} . \nonumber
	\end{align}
	This implies that sum \eqref{eq:psii} converges uniformly on $D$. By Weierstrass' Theorem, it follows that $\psi$ defined by \eqref{eq:psii} is analytic on $D$. Now \eqref{eq:homo} follows easily: 
	\begin{align*}
		\psi\left(f(\zeta )\right) & =-\sum_{n=0}^{+\infty} h\big( f^{\circ(n+1)}(\zeta )\big) \\
		&=-\sum_{n=1}^{+\infty} h\left( f^{\circ n}(\zeta )\right) \\
		& =-\big(-\psi(\zeta )-  h(\zeta )\big)\\
		& =\psi(\zeta )+  h(\zeta ),\text{ for }\zeta \in D.
	\end{align*}
	
	\vspace{0.2cm}
	
	\emph{2. Asymptotics of the solution} ${\psi }$. From \eqref{eq:psii},\ \eqref{eq:prvis} and
	\eqref{eqBefore} it follows that
	\begin{align}
		\left|\psi(\zeta )\right| & \leq \sum_{n=0}^{\infty}\left| h\left( f^{\circ n}(\zeta )\right)\right|\leq \mathrm{e}^{-\alpha \Re (\zeta )} \cdot \frac{1}{1- \frac{1}{\mathrm{e}^{\alpha \cdot \rho^- _{\beta , \varepsilon ,k}(R)}} } , \nonumber 
	\end{align}
	for $\zeta \in D$. This implies that $\psi(\zeta )=O\left(\mathrm{e}^{-\alpha\zeta}\right)$ uniformly on $D$ as
	$\Re (\zeta )\to + \infty $. \\
	
	3. \emph{Uniqueness of the solution.} Suppose that there exists an analytic
	solution $\psi_{1}$ to the homological equation \eqref{eq:homo},
	defined on an ${f}$-invariant subdomain $D_{1}\subseteq \mathcal{R}_{C}$, such that ${\psi }_{1}(\zeta )=o(1)$ uniformly on $D_{1}$ as $\Re (\zeta )\to +\infty$. By \eqref{eqBefore}, note that $(D_{1})_{R}=D_{1}\cap D$ is a nonempty ${f}$-invariant subdomain of $D$. Let 
	$$\psi _{2}(\zeta ):=\psi (\zeta )-\psi _{1}(\zeta ),\ \zeta \in (D_{1})_{R}.$$ 
	Since both $\psi$ and $\psi_1$ satisfy equation \eqref{eq:homo} on $(D_1)_R$ and $\psi (\zeta )=o(1)$, $ \psi_{1}(\zeta)=o(1)$, we have that $\psi _{2}( f(\zeta )) = \psi _{2}(\zeta )$, for $\zeta \in (D_{1})_{R}$, and $\psi _{2}(\zeta )=o(1)$ uniformly on $(D_{1})_{R}$ as $\Re (\zeta )\to + \infty $. Therefore, 
	\begin{equation}
		\psi _{2}( f^{\circ n}(\zeta )) = \psi _{2}(\zeta ), \ \zeta\in (D_1)_R,\ n\in \mathbb{N}.
		\label{eq:eqpsideux}
	\end{equation}
	Note that $\psi _{2}(\zeta )=o(1)$, as $\Re(\zeta)\to +\infty$ uniformly on $(D_1)_R$. From \eqref{eqBefore} it follows that $\Re(f^{\circ n}(\zeta))\to +\infty$ as $n \to \infty$, for every $\zeta\in (D_1)_R$. Therefore, passing to the limit as $n\to\infty$ in \eqref{eq:eqpsideux}, we obtain that  $\psi _{2}\equiv 0$ on $(D_{1})_{R}$. Therefore, $\psi\equiv \psi _{1}$ on $(D_{1})_{R}$.
\end{proof}

\subsection{Proof of Theorem B: uniqueness and existence of a (complex) Dulac linearization}\label{sec:proofB}\

We now gather all the previous results to finish the proof of Theorem B.
\smallskip
 
\begin{proof}[Proof of Theorem B]
	Let $f(\zeta)=\zeta+\beta+o(1)$, for $\beta \in \mathbb{C}^+$, be a hyperbolic complex Dulac germ on a standard quadratic domain
	$\mathcal{R}_{C}$ and let $\widehat{f}$ be its complex Dulac expansion.\\ 
	
	If $\widehat{f}=\zeta+\beta$, then, by \emph{quasianalyticity}, it follows that $f(\zeta)=\zeta+\beta$ for $\zeta\in \mathcal{R}_{C}$, so that $f$ is already linearized.\\
	
	Now, suppose that both $f$ and $\widehat{f}$ is nontrivial. By Lemma~\ref{lem:formal dulac linearization}, there exists a unique formal linearization $\widehat{\varphi}$
	of $\widehat{f}$, which is a parabolic complex Dulac series. By Theorem A, there exists a parabolic
	analytic linearization $\varphi$ of $f$ on the $f$-invariant subdomain $D:=(\mathcal{R}_{C})_{R}$, given as the uniform limit on $D$ of the Koenigs sequence for $f$. Note that $D$ is a ($f$-invariant) germ of $\mathcal R_C$.
	
To prove that $\varphi$ is a complex Dulac germ, we prove that it admits $\widehat\varphi$ as its asymptotic expansion, uniformly on some standard quadratic subdomain $\mathcal R_{C'}$ of $D$, as $\Re (\zeta ) \to + \infty $. 
	
	Let $\varphi_{n},\ n\in\mathbb{N}$, be the partial sums of the formal
	linearization $\widehat{\varphi}$ defined by \eqref{eq:parsums},
	and let 
	\begin{equation}
		\psi_{n}(\zeta ):=\varphi (\zeta )-\varphi_{n}(\zeta ), \; \zeta \in D, \, n\in\mathbb{N}. \label{eq:psij}
	\end{equation}
	Note that, by Theorem A (iii) and \eqref{eq:parsums},
	for every $\delta>0$ such that $\beta_1-\delta>0$ it holds that 
	\begin{equation}
		\psi_{n}(\zeta)=\zeta +o(1)- \zeta -o(\mathrm{e}^{-(\beta_{1}-\delta)\zeta })=o(1), \label{eq:asit}
	\end{equation}
	uniformly on $D$ as $\Re (\zeta )\to + \infty $. Since
	$\varphi$ is an analytic linearization of $ f$
on $D$, by \eqref{eq:psij} the following holds:
	\begin{equation}
		\psi_{n}( f(\zeta ))-\psi_{n}(\zeta )=-\varphi_{n}( f(\zeta )) + \varphi_{n}(\zeta ) +\beta , \text{ for }\zeta \in D\text{ and }n\in\mathbb{N}.\label{eq:ova}
	\end{equation}
	By Lemma \ref{lem:partial linearizations}, for every $n\in\mathbb N$ there exists $\nu_{n}>0$
	such that $(\varphi_{n}\circ  f)(\zeta )- \varphi_{n}(\zeta )=\beta+o\left(\mathrm{e}^{-(\beta_{n}+\nu_{n})\zeta }\right)$,
	uniformly on $\mathcal{R}_{C}$ as $\Re (\zeta) \to + \infty$. Here, $\beta_{n}>0$
	($n\in\mathbb{N}$) are the exponents in the complex Dulac series $\widehat{\varphi}$,
	as in \eqref{eq:serij}. 
	Now applying Lemma~\ref{prop:homological equation}
	to \eqref{eq:ova}, for every $n\in\mathbb{N}$, the homological
	equation \eqref{eq:ova} admits a unique solution $\eta_{n}$ analytic
	on $D$, such that $\eta_{n}(\zeta )=O(\mathrm{e}^{-(\beta _{n}+\nu_n)\zeta })$ uniformly on $D$, as $\Re (\zeta )\to + \infty $.

	Since $\psi_n=o(1)$ by \eqref{eq:asit}, 
	it follows from Lemma~\ref{prop:homological equation}\,$(3)$ that $\psi_{n}\equiv\eta_{n}$ on $D$,
	$n\in\mathbb{N}$. 
	Therefore, 
	$$\psi _{n}(\zeta )=O(\mathrm{e}^{-(\beta _{n}+\nu_n)\zeta }),\ n\in \mathbb{N}.$$
	This implies, by \eqref{eq:psij}, that $\widehat{\varphi}$ is the asymptotic expansion of the linearization $\varphi$ on $D$. Recall that $D$ is a germ of the standard quadratic domain $\mathcal R_C$, by Definition~\ref{def:gsqd}. 
	
	By Remark~\ref{rem:gdomain}, there exists a standard quadratic subdomain $\mathcal R_{C'}$, where $C'>R,C$, that is contained in $D$. Therefore, $\widehat{\varphi }$ is the Dulac asymptotic expansion of $\varphi $ also on the standard quadratic subdomain $\mathcal{R}_{C'}$. Thus, $\varphi$ is a parabolic complex Dulac germ (its domain of definition contains standard quadratic domains).\\

	Finally, the uniqueness of the linearization $\varphi$ follows from Theorem A $(iv)$. The statement about real Dulac linearizations of real Dulac germs follows from Theorem A $(ii)$.
\end{proof}


\def\cprime{$'$}
\providecommand{\bysame}{\leavevmode\hbox to3em{\hrulefill}\thinspace}
\providecommand{\MR}{\relax\ifhmode\unskip\space\fi MR }
\providecommand{\MRhref}[2]{%
	\href{http://www.ams.org/mathscinet-getitem?mr=#1}{#2}
}
\providecommand{\href}[2]{#2}

\emph{Addresses:}\

$^{1}$: University of Split, Faculty of Science, Ru\dj era Bo\v skovi\' ca 33, 21000 Split, Croatia, email: dino.peran@pmfst.hr

$^{2}$: University of Zagreb, Faculty of Science, Department of Mathematics, Bijeni\v cka 30, 10000 Zagreb, Croatia, email: maja.resman@math.hr
 
$^{3}$: Institut de Math\' ematiques de Bourgogne (UMR 5584 CNRS), Universit\' e de Bourgogne, Facult\' e des Sciences Mirande, 9 avenue Alain Savary, BP 47870, 21078 Dijon Cedex, France, email: jean-philippe.rolin@u-bourgogne.fr

$^{4}$: Universit\' e de Paris and Sorbonne Universit\' e, CNRS, IMJ-PRG, F-75006 Paris, France, email: tamara.servi@imj-prg.fr
\end{document}